\def\sp{\text{sp }}
\def\qandq{\quad\text{and}\quad}
\def\where{\quad\text{where}}
\def\del{\overline{\nabla}}
\def\div{\nabla\cdot}
\def\sgn{\text{sign }}
\def\Cm{\Bbb{C}}
\def\Imm{\Bbb{I}}
\def\Mm{\Bbb{M}}
\def\Rm{\Bbb{R}}
\def\Sm{\Bbb{S}}
\def\Um{\Bbb{U}}
\def\bfe{\bold{e}}
\def\bff{\bold{f}}
\def\bfg{\bold{g}}
\def\bfS{\bold{S}}
\def\bfk{\bold{k}}
\def\bft{\bold{t}}
\def\bfu{\bold{u}}
\def\bfv{\bold{v}}
\def\bfI{\bold{I}}
\def\bfP{\bold{P}}
\def\bmrho{\boldsymbol{\rho}}
\def\bmphi{\boldsymbol{\varphi}}
\def\bmnu{\boldsymbol{\nu}}
\def\C{\mathcal{C}}
\def\H{\mathcal{H}}
\def\L{\mathcal{L}}
\def\J{\mathcal{J}}
\def\O{\mathcal{O}}
\def\R{\mathcal{R}}
\newtheorem{theorem}{Theorem}[section]
\newtheorem{lemma}[theorem]{Lemma}
\newtheorem{proposition}{Proposition}
\theoremstyle{definition}
\newtheorem{remark}{Remark}
\newcommand{\ep}{\varepsilon}
\title[Inverse diffusion from internal data]
      {Inverse diffusion problems with redundant internal information}
\author[Fran\c cois Monard and Guillaume Bal]{}
\subjclass{Primary: 35R30 ; Secondary: 35J45, 53B21.}
 \keywords{Inverse conductivity, Calder\'on's problem, hybrid methods, power density measurements, strongly coupled elliptic systems, differential geometry.}
 \email{fm2234@columbia.edu}
 \email{gb2030@columbia.edu}
\thanks{The authors are supported by NSF under grant DMS-0804696}
\begin{document}
\maketitle

\centerline{\scshape Fran\c cois Monard }
\medskip
{\footnotesize
 \centerline{Department of Applied Physics and Applied Mathematics, Columbia University}
   \centerline{ New York NY, 10027, USA}
} 

\medskip

\centerline{\scshape Guillaume Bal}
\medskip
{\footnotesize
 \centerline{ Department of Applied Physics and Applied Mathematics, Columbia University}
   \centerline{New York NY, 10027, USA}
}

\bigskip

 \centerline{(Communicated by the associate editor name)}

\begin{abstract}
   This paper concerns the reconstruction of a scalar diffusion coefficient $\sigma(x)$ from redundant functionals of the form $H_i(x)=\sigma^{2\alpha}(x)|\nabla u_i|^2(x)$ where $\alpha\in\Rm$ and  $u_i$ is a solution of the elliptic problem $\nabla\cdot \sigma \nabla u_i=0$ for $1\leq i\leq I$. The case $\alpha=\frac12$ is used to model measurements obtained from modulating a domain of interest by ultrasound and finds applications in ultrasound modulated electrical impedance tomography (UMEIT), ultrasound modulated optical tomography (UMOT) as well as impedance acoustic computerized tomography (ImpACT). The case $\alpha=1$ finds applications in Magnetic Resonance Electrical Impedance Tomography (MREIT). 
   
   We present two explicit reconstruction procedures of $\sigma$ for appropriate choices of $I$ and of traces of $u_i$ at the boundary of a domain of interest. The first procedure involves the solution of an over-determined system of ordinary differential equations and generalizes to the multi-dimensional case and to (almost) arbitrary values of $\alpha$ the results obtained in two and three dimensions in \cite{CFGK} and \cite{BBMT}, respectively, in the case $\alpha=\frac12$. The second procedure consists of solving a system of linear elliptic equations, which we can prove admits a unique solution in specific situations. 
\end{abstract}

\section{Introduction} \label{sec:intro}

Medical imaging modalities aim to combine high resolution with high contrast between healthy and unhealthy tissues. Optical Tomography and Electrical Impedance Tomography display such high contrasts but often suffer from poor resolution. Ultrasound Tomography and Magnetic Resonance Imaging are high resolution modalities that sometimes suffer from low contrast. The ultrasound modulation of electrical or optical properties of tissues and the combination of simultaneous electrical and magnetic resonance measurements both offer the possibility to combine high resolution with high contrast. For the acquisition of ultrasound-modulated measurements, we refer the reader to, e.g., \cite{ABCTF,BBMT,CFGK,GS-SIAP-09,KK} for works in the mathematical literature. For the acquisition of internal information on electrical conductivities by magnetic resonance imaging, we refer the reader to, e.g., \cite{KKSY-SIMA-02,NTT-IP-07,NTT-IP-09,NTT-Rev-11}.

Mathematically, we aim to reconstruct a scalar diffusion coefficient $\sigma$ in an elliptic equation from knowledge of internal information of the form $H_{ij}(x)=\sigma^{2\alpha} (x)\nabla u_i(x)\cdot\nabla u_j(x)$ for $\alpha\in\Rm$ and $1\leq i,j\leq m$, where $u_i$ and $u_j$ are solutions of the elliptic problem with different boundary conditions; see \eqref{eq:conductivity} and \eqref{eq:Hij} below. Coupling impedance (or diffusion) with acoustic waves or magnetic resonance correspond to the cases $\alpha=\frac12$ and $\alpha=1$, respectively. Such information can be obtained from functionals of the form $\sigma^{2\alpha}(x) |\nabla u_i|^2(x)$ by standard polarization (expressions of the form $4ab=(a+b)^2-(a-b)^2$).

This problem was first solved in the two dimensional setting in \cite{CFGK} in the case $m=2$ and $\alpha=\frac12$. The three dimensional setting was addressed in \cite{BBMT} with $m=4$ and $\alpha=\frac12$. In these papers, the elliptic equation is recast as a system of equations for quantities of the form $S_i=\sigma^\alpha \nabla u_i$ using the elliptic equation and the fact that $\nabla u_i$ is curl free. This strategy allows one to eliminate $\sigma$ from the system of equations and solve for the vectors $S_i$. The stable reconstruction of $\sigma$ is then straightforward. The case $\alpha=\frac12$ in the setting of non-redundant measurements, i.e., with $m=1$ and measurements of the form $H=\sigma|\nabla u|^2$ is considered in \cite{B-UMEIT-11}. It is shown in that paper that the stable reconstruction of $\sigma$ may not be possible from such non-redundant measurements. This justifies the analysis of redundant measurements.

The objectives of this paper are twofold. We first generalize the reconstruction of $\sigma$ to the case of arbitrary space dimension $n$ and almost arbitrary $\alpha\in\Rm$. Assuming that the vectors $S_i$ form a frame, we obtain a system of equations for the vectors $S_i$ that involves $H_{ij}=S_i\cdot S_j$  but no longer $\sigma$. The resulting system of equations may be seen as an overdetermined nonlinear system of equations. By appropriately choosing the boundary conditions used to construct the internal functionals $H_{ij}(x)$, we obtain a global uniqueness and stability result for the reconstruction of the scalar quantity $\sigma(x)$. Although several portions of the algorithm generalize to the reconstruction of anisotropic diffusion tensors, we restrict ourselves to the scalar case in this paper.  We also describe and investigate the compatibility conditions associated with such a redundant system. 

The second objective of the paper is to present a system of elliptic equations for the solutions $u_i$ with constitutive parameters that depend on the measurements $H_{ij}$ but not on the unknown diffusion coefficient $\sigma$. We show that the system is uniquely solvable when a Fredholm alternative holds. We obtain existence and uniqueness results for the proposed system for all but a discrete number of values of the dimension $n$ and the coefficient $\alpha\in\Rm$. 

Both algorithms require boundary conditions for the elliptic solutions that ensure that $n$ of the vectors $S_i$ form a frame in $\Rm^n$ at each point of the domain of interest. Whereas such a condition is easy to meet in two space dimensions, in dimensions three and higher, the only available technique that guarantees such an independence is based on using complex geometrical optics (CGO) solutions. We generalize here the CGO construction of \cite{BBMT} to the multi-dimensional setting and for almost all values of $\alpha$.

The inverse diffusion problems with internal functionals considered here are examples of hybrid inverse problem where two imaging modalities are combined to provide both high resolution and high contrast. For recent works on the mathematics of hybrid inverse problems and their many applications in medical imaging, we refer the reader to the articles in the book \cite{S-Sp-2011} and to the recent review paper \cite{B}.

The rest of the paper is structured as follows. Section \ref{sec:results} presents the main results of the paper on the stable reconstruction of $\sigma$ from available internal functionals. The elimination of $\sigma$ from the system of equations for the vectors $S_i$ and the corresponding differential calculus is explained in section \ref{sec:datatoF}. The redundant system of equations for the vectors $S_i$ is addressed in section \ref{sec:ODE}  while the system of linear equations for the solutions $u_i$ is given in section \ref{sec:elliptic}. Finally, section \ref{sec:comp} presents further reconstruction algorithms in the two dimensional case and analyzes the compatibility conditions satisfied by the redundant data and their potential use.

\section{Statement of the main results} \label{sec:results}
Let $X$ be an open convex bounded domain of $\Rm^n$ with $n\ge 2$. 
In the following, we address the reconstruction of the scalar conductivity (or diffusion) coefficient $\sigma$ in the equation 
\begin{align}
    \nabla\cdot(\sigma\nabla u_i) = 0 \quad (X), \quad u_i|_{\partial X} = g_i, \quad 1\le i\le m,
    \label{eq:conductivity}
\end{align}
where $m\ge n$, from knowledge of the interior functionals
\begin{align}
    H_{ij} (x) = \sigma(x)^{2\alpha} \nabla u_i (x) \cdot\nabla u_j (x), \qquad 1\le j\le i\le m,
    \label{eq:Hij}
\end{align}
where $\alpha\in\Rm$ is fixed and such that $(n-2)\alpha+1\not=0$. The derivation of the internal functionals \eqref{eq:Hij} in the case $\alpha = \frac{1}{2}$ is detailed in \cite{BBMT,KK} as examples of synthesized focusing, in \cite{ABCTF} in a setup of temporal, physical, focusing, and in \cite{GS-SIAP-09} by considering thermoelastic effects. The case $\alpha=1$ with $m=1$ related to MREIT and CDII (Current Density Impedance Imaging) is addressed in \cite{KKSY-SIMA-02,NTT-IP-07,NTT-IP-09,NTT-Rev-11}.

Following a similar approach to \cite{BBMT,CFGK}, we first perform the change of unknown functions $S_i = \sigma^\alpha \nabla u_i$ for every $i$ and define
\begin{equation}
  \label{eq:F}F(x):= \nabla\log\sigma(x).
\end{equation}
We also equip $X\subset \Rm^n$ with its Euclidean metric $g_{ij} = \delta_{ij}$ in the canonical basis $(\bfe_1, \dots, \bfe_n)$. For a given vector field $V = V^i \bfe_i$ defined on $X$, we define the corresponding one-form $V^\flat:= V^i dx^i$ (i.e., by means of the \emph{flat operator}). With this notation, we obtain that the vector fields $S_j$ satisfy the system of equations
\begin{align}
    \div S_j &= - (1-\alpha) F\cdot S_j, \label{eq:divSj} \\
    d S_j^\flat &= \alpha F^\flat\wedge S_j^\flat, \qquad 1\le j\le m, \label{eq:dSj}
\end{align}
where $\wedge$ and $d$ denote the usual exterior product and exterior derivative, respectively. The first equation stems directly from \eqref{eq:conductivity} whereas the second one states that the one-form $\sigma^{-\alpha} S_j^\flat = du_j$ is exact, therefore closed, and hence $d(\sigma^{-\alpha} S_j^\flat)=0$. When $n=2,3$, equation \eqref{eq:dSj} is recast as: 
\begin{align*}
    n=2:\quad \nabla^\perp\cdot S_j - \alpha JF\cdot S_j = 0, \qquad  n=3:\quad \text{curl } S_j - \alpha F\times S_j = 0,
\end{align*}
where in dimension $n=2$, we define $J := \left[ \begin{smallmatrix} 0 & -1 \\ 1 & 0 \end{smallmatrix} \right]$ and $\nabla^\perp := J\nabla$, and in dimension $n=3$, $\times$ denotes the standard cross-product. The available information becomes $H_{ij}(x) = S_i(x)\cdot S_j(x)$. 

A crucial hypothesis for our reconstruction procedure is that the $m$ gradients have maximal rank in $\Rm^n$ at every point $x\in X$. This hypothesis can be formalized by the somewhat stronger statement: there exists a finite open covering $\O = \{\Omega_k\}_{1\le k\le N}$ of $X$ (i.e. $X\subset\cup_{k=1}^N \Omega_k$), an indexing function $\tau:[1,N]\ni i\mapsto \tau(i) = (\tau(i)_1,\dots,\tau(i)_n)\in [1,m]^n$ and a positive constant $c_0$ such that 
\begin{align}
    \min_{1\le i\le N} \inf_{x\in\Omega_i} \det(S_{\tau(i)_1} (x), \dots, S_{\tau(i)_n} (x)) \ge c_0 >0.
    \label{eq:positivityS}
\end{align}
This assumption is equivalent to imposing the following condition on the data
\begin{align}
    \min_{1\le i\le N} \inf_{x\in\Omega_i} \det H^{\tau(i)}(x) \ge c_0^2 >0,
    \label{eq:positivity}
\end{align}
where $H^{\tau(i)}$ stands for the $n\times n$ matrix of elements $H^{\tau(i)}_{kl} = S_{\tau(i)_k}\cdot S_{\tau(i)_l}$.
While one can always find illuminations such that \eqref{eq:positivityS} holds in two dimensions with $m=n=2$ and $\O = \{X\}$ (the most preferrable case) by virtue of \cite[Theorem 4]{AN}, higher dimensions can be dealt with using complex geometrical optics solutions provided that $\sigma$ has enough regularity, as the following lemma shows.
\begin{lemma}\label{lem:CGO}
    Let $n\ge 3$ and $\sigma\in H^{\frac{n}{2}+3+\ep}(X)$ for some $\ep>0$ be bounded from below by a positive constant. Then
    \begin{itemize}
	\item[(i)] for $n$ even, there exists a non-empty open set $G$ of illuminations $\{g_1,\dots,g_n\}$ such that for any $\bfg\in G$, the condition \eqref{eq:positivity} holds with $\O=\{X\}$ for some constant $c_0 >0$. 
	\item[(ii)] For $n$ odd, there exists a non-empty set $G$ of illuminations $\{g_1,..,g_{n+1}\}$ such that for any $\bfg\in G$ there exists an open cover of $X$ of the form $\left\{ \Omega_{2i-1}, \Omega_{2i} \right\}_{1\le i\le N}$ and a constant $c_0>0$ such that 
    \begin{align}
	\inf_{x\in\Omega_{2i-1}} \det(S_1,\dots,S_{n-1}, \ep_i S_n) \ge c_0 \qandq \inf_{x\in\Omega_{2i}} \det(S_1,\dots,S_{n-1}, \tilde\ep_i S_{n+1}) \ge c_0,
	\label{eq:positivitynodd}
    \end{align}
    for $1\le i\le N$ and with $\ep_i,\tilde\ep_i = \pm 1$. 
    \end{itemize}
\end{lemma}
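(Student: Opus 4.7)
\bigskip
\noindent\emph{Proof plan.}
The plan is to generalize the complex geometric optics (CGO) construction of \cite{BBMT} to arbitrary dimension $n\ge 3$ and almost arbitrary $\alpha\in\Rm$. I first substitute $v = \sigma^{1/2}u$ to recast \eqref{eq:conductivity} as the Schr\"odinger equation $(\Delta - q)v = 0$ with $q = \sigma^{-1/2}\Delta \sigma^{1/2}$. The regularity hypothesis $\sigma\in H^{\frac{n}{2}+3+\ep}(X)$ is precisely what is needed to make $q$ bounded (via the Sobolev embedding $H^{\frac{n}{2}+\ep}\hookrightarrow L^\infty$) and to close the classical Sylvester--Uhlmann CGO estimates in $C^1(\overline X)$. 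For each $\bmrho\in\Cm^n$ with $\bmrho\cdot\bmrho=0$ and $|\text{Re}\,\bmrho|=|\text{Im}\,\bmrho|=1$, this yields solutions $v_\bmrho = e^{\bmrho\cdot x/h}(1+\psi_\bmrho)$ with $\|\psi_\bmrho\|_{C^1(\overline X)}\to 0$ as $h\to 0$. The real and imaginary parts of $\sigma^{-1/2}v_\bmrho|_{\partial X}$ then furnish admissible real illuminations.

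Differentiating back, one checks that $S_\bmrho:=\sigma^\alpha\nabla u_\bmrho = h^{-1}\sigma^{\alpha-1/2}e^{\bmrho\cdot x/h}\bmrho + O(1)$ uniformly on $\overline X$. The associated real/imaginary pair of $S$-vectors therefore lies at leading order in the two-plane $\text{span}(\text{Re}\,\bmrho,\text{Im}\,\bmrho)$, with amplitudes of the form $e^{\text{Re}\,\bmrho\cdot x/h}\cos(\text{Im}\,\bmrho\cdot x/h)$ and $e^{\text{Re}\,\bmrho\cdot x/h}\sin(\text{Im}\,\bmrho\cdot x/h)$. The key observation is that the $2\times 2$ Gram determinant of this pair reduces at leading order to $h^{-2}\sigma^{2\alpha-1}e^{2\text{Re}\,\bmrho\cdot x/h}$ by virtue of $\cos^2+\sin^2=1$, so no sign cancellation occurs inside a single CGO block.

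For part (i), with $n=2k$, I choose $k$ complex vectors $\bmrho^{(1)},\dots,\bmrho^{(k)}$ whose real and imaginary parts jointly constitute an orthogonal basis of $\Rm^n$ (possible because $n$ is even). The resulting $n$ real solutions have a Gram matrix whose leading-order part is block-diagonal with the positive $2\times 2$ blocks above, hence its determinant is uniformly bounded below on $\overline X$ once $h$ is small enough. This produces \eqref{eq:positivity} with $\O=\{X\}$, and continuous dependence on boundary traces turns this into an open set $G$ of admissible illuminations.

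For part (ii), with $n=2k+1$, the preceding construction applied to a real $(n-1)$-plane yields an $(n-1)$-frame globally on $\overline X$. The last real direction is supplied by two candidate solutions $u_n,u_{n+1}$ obtained from one additional complex $\bmrho^{(*)}$ whose real and imaginary parts are transverse to that hyperplane. At each point $x\in X$ at least one of $|\cos(\text{Im}\,\bmrho^{(*)}\cdot x/h)|$ and $|\sin(\text{Im}\,\bmrho^{(*)}\cdot x/h)|$ exceeds $1/\sqrt{2}$, so at least one of $S_n(x),S_{n+1}(x)$ completes the frame up to a sign. Extracting a finite open cover of $\overline X$ by sets on which such a choice, together with the sign $\ep_i$ or $\tilde\ep_i$ of the relevant trigonometric factor, is locally constant yields \eqref{eq:positivitynodd}. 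The main obstacle is exactly this parity obstruction: oscillation of the leading CGO term forbids a single global $n$-frame in odd dimension, which forces both the patchwise statement and the sign factors; once the CGO estimates are in place, the rest is a compactness argument on $\overline X$ combined with the leading-order asymptotics.
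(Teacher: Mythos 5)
Your proposal follows essentially the same route as the paper: the Liouville substitution $v=\sqrt\sigma\,u$, Sylvester--Uhlmann CGO solutions with $C^1$ control of the remainder under the stated Sobolev regularity, pairing real and imaginary parts so that $\cos^2+\sin^2=1$ kills the oscillation within each two-plane for $n$ even, and for $n$ odd using two candidate solutions whose leading out-of-hyperplane components are $\cos$ and $\sin$ of the same phase, patched over a finite cover with signs $\ep_i,\tilde\ep_i$. The argument is correct (modulo the harmless slip that your displayed quantity is the $2\times2$ determinant of the pair rather than its Gram determinant, and that the real and imaginary parts of the extra $\bmrho^{(*)}$ cannot both be normal to the hyperplane in odd dimension — only their joint projection onto the normal direction needs to be nonzero, exactly as in the paper's choice $\bmrho_p=\rho(\bfe_n+i\bfe_1)$).
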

The first step toward an inversion is to express the source term $F$ in terms of a local frame:
\begin{lemma} \label{lem:datatoF}
    Let $\Omega\subset X$ open where, up to renumbering solutions, we have 
    \begin{align*}
	\inf_{x\in \Omega} \det(S_1(x),\dots,S_n(x)) \ge c_0 >0.
    \end{align*}
    Then at every point $x\in\Omega$ and denoting $H(x):= \{S_i(x)\cdot S_j(x)\}_{1\le i,j\le n}$, $D(x) = \sqrt{\det H(x)}$, the vector field $F(x) = \nabla\log\sigma(x)$ is given by the following formulas
    \begin{align}
	\begin{split}
	    F &= \frac{c_F}{D}\sum_{i,j=1}^n (\nabla (D H^{ij}) \cdot S_i) S_j = c_F \Big(\nabla\log D + \sum_{i,j=1}^n (\nabla H^{ij} \cdot S_i) S_j \Big),  \\
	    c_F &:= ((n-2)\alpha+1)^{-1}.
	\end{split}
	\label{eq:datatoF}
    \end{align}
    where $H^{ij}$ denotes the element $(i,j)$ of the matrix $H^{-1}$. 
\end{lemma}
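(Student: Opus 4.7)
The plan is to decompose $F$ in the frame $(S_1,\dots,S_n)$ and read off its coordinates from the $H_{ij}$ by combining the product rule for $H_{ij}=S_i\cdot S_j$ with the two differential identities \eqref{eq:divSj}--\eqref{eq:dSj}. A useful device is the reciprocal frame $S^j:=\sum_k H^{jk}S_k$, which satisfies $S^i\cdot S_j=\delta^i_j$, the pointwise completeness relation $\sum_p S^p\otimes S_p = I$ (equivalently $\sum_p (S^p)_k(S_p)_l=\delta_{kl}$ in Euclidean components), and the expansion $w=\sum_j(w\cdot S_j)S^j$ for any vector $w$; in particular $F=\sum_j (F\cdot S_j)S^j$.

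First I would compute $\nabla\log D$ from the data. By Jacobi's formula, $2\nabla\log D = \sum_{i,j}H^{ij}\nabla H_{ij}$. Expanding $\partial_l H_{ij}$ by the product rule, substituting \eqref{eq:dSj} in the index form $\partial_l S_i^k=\partial_k S_i^l+\alpha(F^l S_i^k-F^k S_i^l)$, and collecting via $\sum_j H^{ij}S_j=S^i$, $\sum_{i,j}H^{ij}H_{ij}=n$, the completeness relation, and the expansion of $F$, one obtains
\begin{equation*}
    \nabla\log D = V + \alpha(n-1)F, \qquad V:=\sum_i (S^i\cdot\nabla) S_i.
\end{equation*}

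Next I would evaluate $\sum_{i,j}(\nabla H^{ij}\cdot S_i)S_j$. Using $\partial_l H^{ij}=-H^{ip}H^{jq}\partial_l H_{pq}$ together with $\sum_i H^{ip}S_i=S^p$, this sum rewrites as $-\sum_{p,q}\bigl(S^p\cdot\nabla H_{pq}\bigr)S^q$. Expanding $\nabla H_{pq}$ by the product rule again, the completeness relation collapses one of the two contractions into $\div S_q$, which by \eqref{eq:divSj} equals $-(1-\alpha)F\cdot S_q$, while the other contraction reassembles into $S_q\cdot V$. Summing over $q$ and applying $\sum_q(w\cdot S_q)S^q=w$ to $w=V$ and to $w=F$ yields
\begin{equation*}
    \sum_{i,j}(\nabla H^{ij}\cdot S_i) S_j = -V + (1-\alpha)F.
\end{equation*}

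Adding the two displayed identities eliminates $V$ and delivers $\nabla\log D + \sum_{i,j}(\nabla H^{ij}\cdot S_i)S_j = [(n-2)\alpha+1]F$, which is the second formula in \eqref{eq:datatoF} after division by the nonzero constant $c_F^{-1}$. The first formula follows from the Leibniz rule $\nabla(DH^{ij})=H^{ij}\nabla D+D\nabla H^{ij}$ and the observation $\sum_{i,j}H^{ij}(\nabla D\cdot S_i)S_j=\sum_i(\nabla D\cdot S_i)S^i=\nabla D$. The main difficulty is purely algebraic bookkeeping: the coefficient $(n-2)\alpha+1$ only appears after balancing the $\alpha(n-1)$ term produced by \eqref{eq:dSj} in the first computation against the $(1-\alpha)$ term produced by \eqref{eq:divSj} in the second, and the completeness relation is the essential ingredient that makes $\div S_q$ appear at all in the second computation.
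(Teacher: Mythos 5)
Your argument is correct, and every step checks out: Jacobi's formula gives $2\nabla\log D=\sum_{i,j}H^{ij}\nabla H_{ij}$; inserting $\partial_l S_i^k=\partial_k S_i^l+\alpha(F^lS_i^k-F^kS_i^l)$ and using $\sum_{i,j}H^{ij}H_{ij}=n$ together with the completeness relation indeed yields $\nabla\log D=V+\alpha(n-1)F$; the second identity $\sum_{i,j}(\nabla H^{ij}\cdot S_i)S_j=-V+(1-\alpha)F$ follows as you say from $\nabla H^{ij}=-H^{ip}H^{jq}\nabla H_{pq}$, the collapse of $\sum_p(S^p)^lS_p^k=\delta_{lk}$ onto $\div S_q$, and \eqref{eq:divSj}; and the sum produces exactly $((n-2)\alpha+1)F=c_F^{-1}F$. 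Your route is, however, genuinely different from the paper's. The paper introduces the Hodge duals $X_j^\flat=(-1)^{n-1}\sigma_j\star(S_{i_1}^\flat\wedge\dots\wedge S_{i_{n-1}}^\flat)$, proves the single divergence identity $\div X_j=(n-1)\alpha F\cdot X_j$ by differentiating the $(n-1)$-fold wedge product with \eqref{eq:dSj}, identifies $X_j=DH^{ij}S_i$ by computing determinants, and then expands that divergence with \eqref{eq:divSj} to read off $F$; note that $X_j$ is precisely $D$ times your reciprocal vector $S^j$, so the two proofs manipulate the same object in disguise. The paper's approach proves the first formula of \eqref{eq:datatoF} directly and deduces the second, produces the factor $(n-1)\alpha$ from the $n-1$ wedge factors, and keeps the computation coordinate-free; it also isolates the intermediate identity \eqref{eq:divXj}, which is reused later in the paper. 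Your approach proves the second formula first, replaces the exterior-calculus step by Jacobi's formula plus the derivative-of-the-inverse identity, and cancels the auxiliary field $V=\sum_i(S^i\cdot\nabla)S_i$ between two separate computations rather than never introducing it; this is more elementary (purely index bookkeeping with the completeness relation) at the cost of being somewhat less structured. Both are complete proofs of the lemma.
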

Formula \eqref{eq:datatoF} was first proved in \cite{BBMT} in the two- and three-dimensional cases with $\alpha=\frac{1}{2}$ and is here proved for general $n$ and $\alpha\in\Rm$ such that $(n-2)\alpha+1\ne 0$. This formula gives us a way to reconstruct $F$ locally from $n$ linearly independent solutions. Assuming condition \eqref{eq:positivity}, one can then reconstruct $F$ globally over $X$. 

From lemma \ref{lem:datatoF}, one can follow two directions to reconstruct the conductivity, which we now describe in more detail in the next two paragraphs. 
\paragraph{\underline{The ODE-based reconstruction procedure}}
The first approach consists in plugging equation \eqref{eq:datatoF} back into the system \eqref{eq:divSj}-\eqref{eq:dSj} and obtain a closed system for the vectors $S_j$. We then show that the resulting system leads to a gradient system, which can then be solved for the vectors $S_j$ by ODE integration. Once the vectors $S_j$ are reconstructed, one recovers $\sigma$ from the knowledge of its value at a given point and the fact that $\nabla\log\sigma$ is now known by equation \eqref{eq:datatoF}. This approach is a generalization of the results of \cite{BBMT} to higher-dimensional settings and general $\alpha\in \Rm$ such that $(n-2)\alpha+1\ne 0$, and leads to well-posed reconstructions as stated in the following:
\begin{theorem}[Global uniqueness and stability, ODE-based reconstruction procedure] \label{thm:ODEstab}
    Let $X\subset\Rm^n, n\ge 2$ be an open convex bounded domain, and let two sets of $m\ge n$ solutions of \eqref{eq:conductivity} generate measurements $(H,H')$ whose components belong to $W^{1,\infty}(X)$, and who jointly satisfy condition \eqref{eq:positivity} with the same triple $(\O,\tau,c_0)$. Let also $x_0\in\overline{\Omega_{i_0}}\subset \overline X$ and $\sigma(x_0), \sigma'(x_0)$ and $\{ S_{\tau(i_0)_i}(x_0), S'_{\tau(i_0)_i}(x_0) \}_{1\le i\le n}$ be given. Let $\sigma$ and $\sigma'$ be the conductivities corresponding to the measurements $H$ and $H'$, respectively. Then we have the stability estimate:
    \begin{align}
	\|\log\sigma-\log\sigma'\|_{W^{1,\infty}(X)} \le C \left(\ep_0 + \|H-H'\|_{W^{1,\infty}(X)} \right),
	\label{eq:ODEstab}
    \end{align}
    where $\ep_0$ is the error committed at the point $x_0$:
    \begin{align*}
	\ep_0 := |\log \sigma(x_0) - \log\sigma'(x_0)| + \sum_{i=1}^n \|S_{\tau(i_0)_i}(x_0)- S'_{\tau(i_0)_i}(x_0)\|.
    \end{align*}
\end{theorem}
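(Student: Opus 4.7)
The plan is to use Lemma~\ref{lem:datatoF} to turn the redundant first-order conditions \eqref{eq:divSj}--\eqref{eq:dSj} into a closed ODE system for the vectors $\{S_j\}$ from which $\sigma$ has been eliminated, integrate that system from $x_0$ along line segments via a Gronwall argument, and then recover $\log\sigma$ by integrating $F=\nabla\log\sigma$.

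The first step is to close the system. I would fix one chart $\Omega=\Omega_{i_0}$, relabel so that $\{S_1,\dots,S_n\}$ is the distinguished frame there, and use Lemma~\ref{lem:datatoF} to express $F$ as an explicit smooth function of $(S_i,H,\nabla H)$ on $\{\det H\ge c_0^2\}$. Next I would compute the scalar coefficients $A_{jkm}:=(\nabla_{S_k} S_j)\cdot S_m$, which satisfy two kinds of identities: (i) the curl equation \eqref{eq:dSj} yields
\begin{equation*}
   A_{jkm}-A_{jmk}=\alpha\bigl((F\cdot S_k)H_{jm}-(F\cdot S_m)H_{jk}\bigr),
\end{equation*}
and (ii) differentiating $H_{jm}=S_j\cdot S_m$ along $S_k$ gives $A_{jkm}+A_{mkj}=S_k\cdot\nabla H_{jm}$. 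A Koszul-type cyclic combination of these then solves explicitly for every $A_{jkm}$ in terms of $(H,\nabla H,F)$; substituting the $F$-formula and inverting the frame to pass back to the Euclidean basis produces a closed first-order system
\begin{equation*}
    \partial_\ell S_j^p = \Phi_{j\ell}^p(x,S_1,\dots,S_n,H,\nabla H),
\end{equation*}
with $\Phi$ polynomial in $S$ and smooth and bounded in $(H,\nabla H)$ on $\{\det H\ge c_0^2\}$.

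The second step is the Gronwall estimate. Writing the analogous equations for $S_j'$ and subtracting, I would observe that along any line segment $[x_0,x]\subset\Omega$ (available by convexity of $X$) the difference $\delta S:=S-S'$ satisfies a differential inequality of the form $|\partial_t \delta S|\le C(|\delta S|+\|H-H'\|_{W^{1,\infty}})$. Gronwall then yields
\begin{equation*}
    \|S_j-S_j'\|_{L^\infty(\Omega)}\le C\bigl(\ep_0+\|H-H'\|_{W^{1,\infty}(X)}\bigr),\quad 1\le j\le n,
\end{equation*}
and feeding this back into the closed ODE gives the same bound for $\nabla S_j-\nabla S_j'$ and hence, via \eqref{eq:datatoF}, for $F-F'$. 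To globalize, I would chain the finitely many charts $\{\Omega_k\}$ starting from $\Omega_{i_0}$: on each overlap $\Omega_k\cap\Omega_{k'}$ the next frame $\{S_{\tau(k')_i}\}$ is expressed in the previously reconstructed $\tau(k)$-frame via the data, e.g.\ $S_p=\sum_{i,j} H^{\tau(k)_i\tau(k)_j} H_{p,\tau(k)_j} S_{\tau(k)_i}$, so stability of the frame transports across each overlap and, after finitely many transitions, bounds $F-F'$ uniformly on $X$. Integrating $\nabla(\log\sigma-\log\sigma')=F-F'$ along segments from $x_0$ and combining with $|\log\sigma(x_0)-\log\sigma'(x_0)|\le\ep_0$ then yields the desired bound \eqref{eq:ODEstab}.

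The hard part will be the first step. The divergence and curl equations alone provide too few scalar constraints to pin down all components of $\nabla S_j$ for a single vector, so one has to exploit the frame hypothesis \eqref{eq:positivityS} to couple the $n$ vectors and use the Koszul-type permutation above, which crucially requires the differentiated identities coming from $H_{jm}=S_j\cdot S_m$ in addition to the curl relation. Once the closed system is in hand, the remaining Gronwall and chart-chaining steps are standard.
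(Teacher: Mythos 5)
Your proposal follows essentially the same route as the paper: the Koszul-type combination of the curl relation \eqref{eq:dSj} with the differentiated identities $S_k\cdot\nabla H_{jm}=(\del_{S_k}S_j)\cdot S_m+(\del_{S_k}S_m)\cdot S_j$, together with the elimination of $F$ via Lemma \ref{lem:datatoF}, is exactly how the paper derives the closed polynomial gradient system \eqref{eq:gradSi}--\eqref{eq:ODES}, and the subsequent Gronwall integration along segments, chaining of the finitely many charts, and integration of $F-F'$ from $x_0$ is the argument the paper invokes (deferring details to \cite{BBMT}). The only cosmetic difference is that the paper avoids your change-of-frame step on overlaps by evolving all $m$ vectors $S_j$ simultaneously in each chart, but your data-driven transition formula via \eqref{eq:reprnd} works equally well.
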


The solution for the vectors $S_i$ and then for $\log\sigma$ requires the solution of full gradient equations of the form $\nabla u = f(u)$, where $u$ stands for either unknown. These overdetermined PDEs require compatibility conditions on $f$ if we wish to ensure that their solution does not depend on the path of integration. Theorem \ref{thm:ODEstab} shows that the reconstruction is unique and stable with respect to the data once a fixed family of integration curves is chosen. The compatibility conditions addressed in section \ref{sec:comp} are shown to depend quadratically on the unknown frame. It is therefore difficult to enforce them while solving for the frame $S$. Nonetheless, depending on the value of $\alpha$, they may lead to algebraic (i.e. pointwise) reconstructions of all or part of the unknown frame, and may also provide further conditions on the data $H_{ij}$. Such analyses are carried out in section \ref{sec:comp}.  

\begin{remark} \label{rem:Rframe}
    Solving a system of equations for the unknown $S_i$ may not be efficient numerically. Let $S$ be the matrix whose columns are the $n$ linearly independent vectors $S_j$ at a given $x$. Then $S^TS=H$ is known.  By the Gram-Schmidt (GS) orthonormalization procedure or by setting $R=SH^{-\frac{1}{2}}$, we can write an equation for an oriented orthonormal frame $R$; see section \ref{sec:Rframe} below. This approach requires that we reconstruct $n(n-1)/2 = \dim SO_n(\Rm)$ scalar functions instead of the $n\times m$ components of the vector fields $\{S_j\}$. The only additional constraint is that the transition matrix from $S$ to $R$ satisfies a certain stability property with respect to the data $H$, see Section \ref{sec:Rframe} for details.
\end{remark}

\begin{remark}\label{rem:alphazero}
   The case $\alpha=0$, corresponding to information of the form $H_{ij}(x)=\nabla u_i(x)\cdot\nabla u_j(x)=S_i(x)\cdot S_j(x)$ simplifies in the sense that the elimination of $F$ is not necessary. Indeed, we show in the next section that knowledge of $S_i\cdot S_j$ and the constraints $dS_j^\flat=0$ for $1\leq j\leq m$ uniquely determine the vectors $S_j$ provided they are known at one point $x_0$. Once $\nabla u_i$ is known, the reconstruction of $\sigma$ may proceed from using \eqref{eq:datatoF}. Note that, alternatively, the equation \eqref{eq:conductivity} may be seen as a transport equation for $\sigma^{1-\alpha}$ when $\alpha\not=1$ once the vector field $\sigma^\alpha\nabla u$ is known. The stability properties of such a reconstruction are established in \cite{BR-IP-11,BU-IP-10}.
\end{remark}

\paragraph{\underline{The elliptic-based reconstruction procedure}}
The second approach is novel and consists in injecting equation \eqref{eq:datatoF} back into the initial conductivity equations and obtain a \emph{strongly coupled elliptic system} of the form
\begin{align}
    \Delta u_i + c_F W_{ij}\cdot \nabla u_j = 0, \quad u_i|_{\partial X} = g_i, \quad 1\le i \le m,
    \label{eq:elliptic}
\end{align}
where the vector fields $W_{ij}$ are known from the data and where  the illuminations $g_i$  were prescribed in the first place. 
Here and below, we use the Einstein convention of summation over repeated indices. 
The vector fields $W_{ij}$ satisfy stability conditions of the form
\begin{align}
    \|W\|_\infty &:= \max_{1\le i,j\le m} \|W_{ij}\|_{L^\infty(X)} \le C_W \|H\|_{W^{1,\infty}(X)}, \label{eq:stabW} \\
    \|W-W'\|_\infty &:= \max_{1\le i,j\le m} \|W_{ij}-W'_{ij}\|_{L^\infty}(X) \le C'_W \|H-H'\|_{W^{1,\infty}(X)}, \label{eq:stabWW}
\end{align}
whenever two data sets $H$ and $H'$ jointly satisfy condition \eqref{eq:positivity} with the same triple $(\O, \tau, c_0)$. After proving solvability of this system, one is able to reconstruct the functions $u_i$ and then to reconstruct $\sigma$ as described below. Uniqueness and stability of the solution to \eqref{eq:elliptic} with respect to the drift fields $W_{ij}$ relies on the fact that $-c_F^{-1} = -( (n-2)\alpha+1)$ is not an eigenvalue of the operator $\bfP_W:\H\to\H$ defined by
\begin{align}
    \bfP_W:\bfv\mapsto \bfP_W\bfv = [\bfP_W\bfv]_i \bfe_i = \Delta_D^{-1} (W_{ij}\cdot\nabla v_j)\ \bfe_i,
    \label{eq:PW}
\end{align}
where $\Delta_D^{-1}$ denotes the inverse of the Dirichlet Laplacian on $X$, and where we have defined the space $\H:= [H_0^1(X)]^m$, which makes $(\H, \|\cdot\|_\H)$ Hilbert once equipped with the norm 
\begin{align}
    \|\bfv\|_\H^2 = \sum_{i=1}^m \|v_i\|_{H_0^1}^2 = \sum_{i=1}^m \int_X |\nabla v_i|^2\ dx, \quad \bfv = (v_1,\dots,v_m).
    \label{eq:H}
\end{align}
When the coefficients $W_{ij}$ are bounded, we show that the operator $\bfP_W$ is compact and its operator norm satisfies the estimate $\|\bfP_W\|\le \sqrt{m} \|\Delta_D^{-1}\| \|W\|_\infty$ (see lemma \ref{lem:PW}), where $\|\Delta_D^{-1}\|$ denotes the operator norm of $\Delta_D^{-1}:L^2(X)\to H_0^1(X)$. As a consequence, the system \eqref{eq:elliptic} satisfies a Fredholm alternative which will provide uniqueness and stability as stated in the next proposition, for all $\alpha\in\Rm$ when $n=2$, and for all $\alpha$ but possibly a discrete set (possibly converging to $-(n-2)^{-1}$) in the interval $\left[ \frac{-\|\bfP_W\|-1}{n-2}, \frac{\|\bfP_W\|-1}{n-2} \right]$ whenever $n\ge 3$. 

\begin{proposition}[Stability of the strongly coupled elliptic system] \label{prop:elliptic}
    Let  vector fields $\{W_{ij}, W_{ij}'\}_{1\le i,j\le m}$ belong to $L^\infty(X)$ and such that $-c_F^{-1}$ is an eigenvalue of neither $\bfP_W$ nor $\bfP_{W'}$. Let $\bfu, \bfu'$ be the unique solutions to \eqref{eq:elliptic} with same illumination $\bfg$ and respective drift terms $W$, $W'$. Then we have that $\bfu-\bfu'\in\H$ and satisfies the stability estimate
    \begin{align}
	\|\bfu-\bfu'\|_\H \le C \|W-W'\|_\infty.
	\label{eq:stabelliptic}
    \end{align}
\end{proposition}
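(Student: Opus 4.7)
The plan is to reduce the stability estimate to a Fredholm-type invertibility argument for the operator $I + c_F \bfP_W$ acting on $\H$. Since $\bfu$ and $\bfu'$ share the same Dirichlet data $\bfg$, their difference $\bfw := \bfu - \bfu'$ lies in $\H$, and subtracting the two equations gives
\begin{equation*}
    \Delta w_i + c_F W_{ij}\cdot\nabla w_j = -c_F (W_{ij}-W_{ij}')\cdot\nabla u_j', \qquad 1\le i\le m.
\end{equation*}
Applying the inverse Dirichlet Laplacian $\Delta_D^{-1}$ componentwise converts this into the fixed-point form
\begin{equation*}
    (I + c_F \bfP_W)\bfw = -c_F\, \bfR(\bfu'), \qquad [\bfR(\bfu')]_i := \Delta_D^{-1}\bigl((W_{ij}-W_{ij}')\cdot\nabla u_j'\bigr),
\end{equation*}
so the estimate \eqref{eq:stabelliptic} will follow from (i) invertibility of $I + c_F\bfP_W$ on $\H$ with a controlled operator norm of its inverse, (ii) an $L^2$--$H^1$ bound on the right-hand side in terms of $\|W-W'\|_\infty$, and (iii) an a priori bound $\|\bfu'\|_{[H^1(X)]^m}\le C$ depending only on $\bfg$ and $\|W'\|_\infty$.

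For (i) I would invoke lemma \ref{lem:PW}: since $W\in L^\infty$, the operator $\bfP_W$ is compact on $\H$, so $I + c_F\bfP_W$ is a compact perturbation of the identity and the Fredholm alternative applies. The hypothesis that $-c_F^{-1}$ is not an eigenvalue of $\bfP_W$ is exactly $\ker(I+c_F\bfP_W)=\{0\}$, hence $I+c_F\bfP_W$ is a topological isomorphism of $\H$ and has bounded inverse. For (ii), the same argument used in lemma \ref{lem:PW} (Cauchy--Schwarz in the index, combined with $\|\Delta_D^{-1}\|_{L^2\to H_0^1}$) yields
\begin{equation*}
    \|\bfR(\bfu')\|_\H \le \sqrt{m}\,\|\Delta_D^{-1}\|\,\|W-W'\|_\infty\,\|\bfu'\|_{[H^1(X)]^m}.
\end{equation*}

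For (iii), I would split $\bfu' = \bfv + \tilde\bfu'$, where $v_i$ is the harmonic extension of $g_i$ and $\tilde\bfu'\in\H$ solves $(I+c_F\bfP_{W'})\tilde\bfu' = -c_F\Delta_D^{-1}(W_{ij}'\cdot\nabla v_j)$; since $-c_F^{-1}$ is not an eigenvalue of $\bfP_{W'}$ either, the same Fredholm argument gives $\|\tilde\bfu'\|_\H\le C(\|W'\|_\infty,\bfg)$, hence a uniform bound on $\|\bfu'\|_{[H^1]^m}$. Combining everything,
\begin{equation*}
    \|\bfw\|_\H \le c_F\,\|(I+c_F\bfP_W)^{-1}\|\,\|\bfR(\bfu')\|_\H \le C\,\|W-W'\|_\infty,
\end{equation*}
which is \eqref{eq:stabelliptic}.

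The only delicate point is step (i): the hypothesis is purely spectral and provides no quantitative control on $\|(I+c_F\bfP_W)^{-1}\|$, so the constant $C$ in \eqref{eq:stabelliptic} depends implicitly on the full spectral data of $\bfP_W$ and cannot be expressed solely in terms of $\|W\|_\infty$ and $\|\Delta_D^{-1}\|$. This is acceptable for the stated qualitative stability result, but it is the one place where a purely algebraic bound is not available and the abstract Fredholm machinery is genuinely needed.
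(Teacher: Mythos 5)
Your proposal is correct and follows essentially the same route as the paper: subtract the two systems, apply $\Delta_D^{-1}$ to obtain $(\bfI+c_F\bfP_W)(\bfu-\bfu') = -c_F\bfR(\bfu')$, invert by the Fredholm alternative via Lemma \ref{lem:PW}, and bound the right-hand side using the operator norm of $\Delta_D^{-1}$ together with an a priori bound on $\bfu'$ obtained from the invertibility of $\bfI+c_F\bfP_{W'}$. The paper organizes the right-hand side as $\bff-\bff'-c_F\bfP_{W-W'}\bfv'$ after lifting the boundary data, which is exactly your single term $-c_F\bfR(\bfu')$ split along $\bfu'=\bfv'+\bfw$, and your closing remark about the non-quantitative dependence of $C$ on $\|(\bfI+c_F\bfP_W)^{-1}\|$ is consistent with the paper's statement.
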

 
\begin{remark}
    In the case $n=2$ or ($\alpha=0$ with $m=n$), we can recast \eqref{eq:elliptic} as a coercive system in divergence form, the injectivity of which follows immediately. These cases correspond to $c_F = 1$.
\end{remark}

Once the solutions $u_i$ are reconstructed, one may reconstruct $\sigma$ using a formula of the form $\sigma = H_{11}/|\nabla u_1|^2$. However, such a formula may not offer the best stability estimates. Another reconstruction strategy is deduced from \eqref{eq:datatoF}, which can be recast locally as
\begin{align}
    \begin{split}
	\nabla (\sigma^{-2\alpha}) &= - \frac{2\alpha c_F}{D} \sum_{i,j=1}^n (\nabla (DH^{ij})\cdot\nabla u_i) \nabla u_j, \quad \alpha\ne 0,\quad (n-2)\alpha\ne -1, \\
	\nabla \log \sigma &= \frac{1}{D} \sum_{i,j=1}^n (\nabla (DH^{ij})\cdot\nabla u_i) \nabla u_j, \quad \alpha =0, \\
    \end{split}
    \label{eq:datatosigma}
\end{align}
where $\nabla u_1,\dots,\nabla u_n$ denote the $n$ linearly independent gradients. As in the ODE-based reconstruction procedure, we can devise an ODE-based algorithm to reconstruct $\sigma$ locally from formula \eqref{eq:datatosigma}.
We then arrive at the following stability result.
\begin{theorem}\label{thm:elliptic}
    Let the conditions of Proposition \ref{prop:elliptic} be satisfied. Then the corresponding $\sigma,\sigma'$ satisfy the estimate
    \begin{align}
	\begin{split}
	    \| \sigma^{-2\alpha} - \sigma'^{-2\alpha} \|_{H^1(X)} &\le C \|H-H'\|_{W^{1,\infty}(X)}, \quad \alpha\ne 0,\quad (n-2)\alpha\ne -1, \\
	    \| \log\sigma - \log\sigma' \|_{H^1(X)} &\le C \|H-H'\|_{W^{1,\infty}(X)}, \quad \alpha =0. \\	    	    
	\end{split}
	\label{eq:stabsigmaelliptic}
    \end{align}
\end{theorem}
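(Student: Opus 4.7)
The plan is to chain three ingredients: the $W^{1,\infty}$-to-$L^\infty$ stability of the drift fields $W_{ij}$ from \eqref{eq:stabWW}, the $\H$-stability of the solutions $\bfu$ provided by Proposition \ref{prop:elliptic}, and the explicit reconstruction formulas \eqref{eq:datatosigma} for $\nabla(\sigma^{-2\alpha})$ (resp. $\nabla \log\sigma$), to transfer the stability of the data back to $\sigma$. First I would combine \eqref{eq:stabWW} with Proposition \ref{prop:elliptic} to obtain
\begin{align*}
\|\bfu-\bfu'\|_\H \le C\,\|W-W'\|_\infty \le C'\,\|H-H'\|_{W^{1,\infty}(X)}.
\end{align*}
Next, since both data sets $H$, $H'$ satisfy \eqref{eq:positivity} with the same triple $(\O,\tau,c_0)$, the auxiliary quantities $D=\sqrt{\det H}$ and the entries $H^{ij}$ of $H^{-1}$, together with their gradients, are uniformly bounded in $L^\infty(\Omega_k)$ and depend Lipschitz-continuously on $H$ in the $W^{1,\infty}$ topology. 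In particular, the vector coefficients $c_F\,\nabla(DH^{ij})/D$ appearing in \eqref{eq:datatosigma} enjoy both an $L^\infty$ bound and a Lipschitz estimate, each controlled by $\|H-H'\|_{W^{1,\infty}}$.

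The main step is then to subtract, on each $\Omega_k$ of the cover, the two copies of \eqref{eq:datatosigma} associated with $(H,\bfu)$ and $(H',\bfu')$, and to split the resulting difference into two families: (a) differences of data coefficients contracted against bilinear expressions in $\nabla u_i$; and (b) differences $\nabla u_i-\nabla u'_i$ contracted against $L^\infty$-bounded coefficients and a remaining gradient of $u_i$ or $u'_i$. Terms of type (a) are pointwise dominated by $C\|H-H'\|_{W^{1,\infty}}|\nabla u|^2$, while terms of type (b) are dominated by $C\|H\|_{W^{1,\infty}}\,|\nabla u-\nabla u'|\,(|\nabla u|+|\nabla u'|)$. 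Integrating over each $\Omega_k$ and summing over the finite cover $\O$ gives
\begin{align*}
\|\nabla(\sigma^{-2\alpha})-\nabla(\sigma'^{-2\alpha})\|_{L^2(X)} \le C\,\|H-H'\|_{W^{1,\infty}(X)}.
\end{align*}

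To upgrade the gradient bound to a full $H^1$ bound, I would use the pointwise identity $\sigma^{-2\alpha}=|\nabla u_1|^2/H_{11}$ available on each $\Omega_k$ (up to renumbering and with denominator bounded below thanks to \eqref{eq:positivity}). This produces an $L^2$ bound for $\sigma^{-2\alpha}-\sigma'^{-2\alpha}$ directly from $\|\bfu-\bfu'\|_\H$ and $\|H-H'\|_{W^{1,\infty}}$, which combined with the previous display yields \eqref{eq:stabsigmaelliptic}. The case $\alpha=0$ is handled identically with $\tfrac{1}{2}\log(|\nabla u_1|^2/H_{11})$ in place of $|\nabla u_1|^2/H_{11}$ and the second line of \eqref{eq:datatosigma} in place of the first.

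I expect the main obstacle to be the type (b) contribution, since controlling $|\nabla u|\,|\nabla u-\nabla u'|$ in $L^2(X)$ is not immediate from Proposition \ref{prop:elliptic}, which supplies $\nabla u-\nabla u'$ only in $L^2$. Circumventing this requires either strengthening the assumptions on $H$ (so that $W_{ij}$ inherit enough regularity for classical Schauder or Calder\'on--Zygmund estimates on \eqref{eq:elliptic} to yield $\nabla u_i\in L^\infty$), or appealing to a Meyers-type higher-integrability argument giving $\nabla u_i\in L^p$ for some $p>2$. Either route is routine but requires careful bookkeeping of how the constants depend on $\|H\|_{W^{1,\infty}(X)}$ and on the uniform positivity constant $c_0$.
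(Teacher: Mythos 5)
Your architecture is exactly the paper's: combine \eqref{eq:stabWW} with Proposition \ref{prop:elliptic}, subtract the two copies of \eqref{eq:datatosigma} on each $\Omega_k$ with the same three-term splitting (coefficient differences against $\nabla u_i\otimes\nabla u_j$, plus two terms carrying $\nabla(u_i-u_i')$), and recover the $L^2$ bound on $\sigma^{-2\alpha}-\sigma'^{-2\alpha}$ itself from the pointwise identity $\sigma^{-2\alpha}=|\nabla u_1|^2/H_{11}$. The only place you diverge is the step you flag as the main obstacle, and there the paper's resolution is much lighter than what you propose: the $u_i$ are not arbitrary solutions of \eqref{eq:elliptic} but the solutions that \emph{generated} the data, so $|\nabla u_i|^2=\sigma^{-2\alpha}H_{ii}$ pointwise; since $H$ has $W^{1,\infty}$ components and $\sigma$ is bounded below (and above) by positive constants, $\|\nabla u_i\|_{L^\infty}$ is controlled a priori by $\sqrt{\|H\|_\infty/\sigma_0}$ with no elliptic regularity theory at all. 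With that bound in hand, your type (b) terms are immediately in $L^2$ with the right constant, and the proof closes.

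Two cautions about your proposed workarounds, should you insist on not using the data relation. The Meyers route as stated does not suffice: if $\nabla u_i\in L^p$ for some finite $p>2$ and $\nabla(u_j-u_j')\in L^2$, the product lies only in $L^r$ with $1/r=1/p+1/2>1/2$, i.e.\ $r<2$, so you would not recover an $L^2$ bound on $\nabla(\sigma^{-2\alpha}-\sigma'^{-2\alpha})$. The Schauder/Calder\'on--Zygmund route would work but imports regularity hypotheses on $W_{ij}$ (hence on $H$) beyond what the theorem assumes. Both are unnecessary once you observe that $\nabla u_i$ is bounded by the data.
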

Note that a necessary and sufficient condition for the unique solvability of \eqref{eq:datatosigma} on a simply connected domain is that the exterior derivative of the right-hand side (seen as a one-form) vanish by an application of the Poincar\'e lemma. The compatibility condition that arises here takes the form of a quadratic equation in the components of $\nabla u_i$ that is difficult to ensure as it depends on the unknowns. 

\section{Geometric setting and proofs of lemmas \ref{lem:datatoF} and \ref{lem:CGO}} \label{sec:datatoF}

Defining geometric notation for now, let us first denote the Euclidean orthonormal frame $\bfe_i = \partial_{x^i}$ and $\bfe^i = dx^i$. For $0\le k\le n$, $\Lambda^k$ denotes the space of $k-$ forms. We recall the definition of the \emph{Hodge star operator} $\star:\Lambda^k\to \Lambda^{n-k}$ for $0\le k\le n$, such that for any elementary $k$-form $dx^I = dx^{i_1}\wedge\dots\wedge dx^{i_k}$ with $I = (i_1,\dots,i_k)$, we have
\begin{align}
    \star dx^{I} = \sigma dx^{J}, \where\quad \sigma = \sgn{(1\dots n) \mapsto (I,J) }. 
    \label{eq:Hodge}
\end{align}
We recall the following useful identities, see e.g., \cite{T}:
\begin{align*}
    \star \star = (-1)^{k(n-k)} \quad\text{on } \Lambda^k, \quad \star (u^\flat\wedge\star v^\flat) = u\cdot v, \quad \star d \star u^\flat = \div u, \quad u,v\in\Lambda^1. 
\end{align*}
We now prove Lemma \ref{lem:datatoF} which is the cornerstone of our explicit reconstructions.

\begin{proof}[Proof of Lemma \ref{lem:datatoF}] Because $S_1(x),\dots,S_n(x)$ is a basis of $\Rm^n$ at any point $x\in X$, a vector $V$ can be represented in this basis by the following representation ($x$ is implicit here)
\begin{align}
    V = H^{ij} (V\cdot S_i) S_j.
    \label{eq:reprnd}
\end{align}
For $j=1,\dots,n$, let us introduce the following $1$-forms:
\begin{align}
     X^\flat_j := (-1)^{n-1} \sigma_j * ( S^\flat_{i_1}\wedge\dots\wedge S^\flat_{i_{n-1}}), \quad (i_1,\dots, i_{n-1}) = (1,\dots,\hat j,\dots,n),
    \label{eq:crossprodnd}
\end{align}
where the hat indicates an omission and $\sigma_j=(-1)^{j-1}$ is the signature of the permutation $(1,2\dots,n)\mapsto (j,1,\dots,j-1,j+1,\dots,n)$. At each $x\in\Omega$, the vector $X_j(x)$ obtained from $ X^\flat_j(x)$ by ``raising an index'' can also be seen as the unique vector obtained by the Riesz representation lemma that corresponds to the linear form $D_j:\Rm^n\to\Rm$ such that for any $V\in \Rm^n$,
\begin{align*}
    D_j (V) = \det(S_1(x), \dots, S_{j-1}(x), V, S_{j+1}(x), \dots S_n(x)) = X_j(x)\cdot V. 
\end{align*}
We now show that the vector fields $X_j$ satisfy a simple divergence equation. We compute
\begin{align*}
    \div X_j = \star d\star  X^\flat_j &= \sigma_j \star d ( S^\flat_{i_1}\wedge\dots\wedge S^\flat_{i_{n-1}}) \\
    &= \sigma_j \star \sum_{k=1}^{n-1} (-1)^k  S^\flat_{i_1}\wedge\dots\wedge d S^\flat_{i_k}\wedge\dots\wedge S^\flat_{i_{n-1}} \\
    &= \sigma_j \star \sum_{k=1}^{n-1} (-1)^k  S^\flat_{i_1}\wedge\dots\wedge \alpha( F^\flat\wedge S^\flat_{i_k})\wedge\dots\wedge S^\flat_{i_{n-1}} \\
    &= (n-1)\alpha \star ( F^\flat \wedge \star X^\flat_j),
\end{align*}
and using the identity $\star (u^\flat\wedge \star v^\flat) = u\cdot v$, we deduce
\begin{align}
    \div X_j = (n-1)\alpha F\cdot X_j, \quad j=1\dots n.
    \label{eq:divXj}
\end{align}
The decomposition of $X_j$ in the basis $S_1, \dots, S_n$ may be obtained by computing its dotproducts with $S_1,\dots,S_n$. Indeed, for $k\ne j$, there is an $l$ such that $i_l = k$ and we have
\begin{align*}
    X_j\cdot S_k = \det (S_1,\dots,S_{j-1}, S_k, S_{j+1}, \dots,S_n) = 0,
\end{align*}
by repetition of the term $S_k$ in the determinant. Now if $k=j$, we have
\begin{align*}
    X_j\cdot S_j = \det (S_1,\dots,S_n) = \det S = D.
\end{align*} 
Using formula \eqref{eq:reprnd}, we deduce that $X_j$ admits the expression
\begin{align*}
    X_j = D H^{ij} S_i.
\end{align*}
Plugging this expression into equation \eqref{eq:divXj}, and using  $\div (\varphi V) = \nabla\varphi\cdot V + \varphi\div V$, we obtain
\begin{align*}
    \nabla (D H^{ij})\cdot S_i + D H^{ij} \div S_i &= (n-1)\alpha F\cdot (D H^{ij} S_i) \\
    \Leftrightarrow \nabla (D H^{ij})\cdot S_i - DH^{ij} (1-\alpha) F\cdot S_i &= (n-1)\alpha D H^{ij} F\cdot S_i \\
    \Leftrightarrow \nabla (DH^{ij})\cdot S_i &= c_F^{-1} D H^{ij} F\cdot S_i.
\end{align*}
Finally using the representation \eqref{eq:reprnd} for $F$ itself yields
\begin{align}
    F = (H^{ij} F\cdot S_i) S_j = \frac{c_F}{D} (\nabla (DH^{ij})\cdot S_i) S_j.
    \label{eq:Fnd1}
\end{align}
We can also recast the previous expression as follows
\begin{align}
    F = c_F \left[ H^{ij} (\nabla\log D \cdot S_i) S_j + ((\nabla H^{ij})\cdot S_i) S_j \right] = c_F \left[ \nabla\log D +  ((\nabla H^{ij})\cdot S_i) S_j  \right], \label{eq:Fnd2}
\end{align}
and the proof is complete. 
\end{proof}

We now give a proof of Lemma \ref{lem:CGO}, which guarantees the existence of illuminations that ensure condition \eqref{eq:positivity} and thus justifies the two global reconstruction approaches. The CGO constructions, introduced in \cite{BU-IP-10} in this context, generalize those defined in \cite{BBMT}.

\begin{proof}[Proof of lemma \ref{lem:CGO}]
    Since $\sigma$ is bounded from above and below by positive constants, it suffices to study the case $\alpha = \frac{1}{2}$ since we have for any $\alpha_1, \alpha_2\in\Rm$, 
    \begin{align*}
	\det (\sigma^{\alpha_1}\nabla u_1,\dots, \sigma^{\alpha_1} \nabla u_n) = \sigma^{n (\alpha_1-\alpha_2)} \det (\sigma^{\alpha_2}\nabla u_1,\dots, \sigma^{\alpha_2} \nabla u_n).
    \end{align*}
    Consider the problem $\nabla\cdot\sigma(x)\nabla u=0$ on $\Rm^n$ with $\sigma(x)$ extended in a continuous manner outside of $X$ and such that $\sigma$ equals $1$ outside of a large ball. The construction requires sufficient smoothness of $\sigma$ in order to be valid. Let $q(x)=-\frac{\Delta\sqrt\sigma}{\sigma}$ on $\Rm^n$. We assume that $q\in H^{\frac n2+1+\ep}(\Rm^n)$, which holds if $\sigma-1\in H^{\frac n2+3+\ep}(\Rm^n)$ for some $\ep>0$, i.e., the original $\sigma_{|X}\in H^{\frac n2+3+\ep}(X)$. Note that by Sobolev imbedding, $\sigma$ is of class $\C^3(\overline X)$ while $q$ is of class $\C^1(\overline X)$. With the above hypotheses, we can apply \cite[Corollary 3.2]{BU-IP-10} which states the following. 
 
   Let $v=\sqrt\sigma u$ so that $\Delta v+qv=0$ on $\Rm^n$. Let $\bmrho\in\Cm^n$ be of the form $\bmrho = \rho(\bfk+i\bfk^\perp)$ with $\bfk, \bfk^\perp\in \Sm^{n-1},\ \bfk\cdot\bfk^\perp=0$, and $\rho = |\bmrho|/\sqrt{2}>0$. Thus, $\bmrho$ satisfies $\bmrho\cdot\bmrho=0$ and $e^{\bmrho\cdot x}$ is a harmonic complex plane wave (hence the name of complex geometrical optics solutions). Now, it is shown in \cite{BU-IP-10}, following works in \cite{calderon80,Syl-Uhl-87}, that 
   
   \begin{displaymath} 
       v_{\bmrho} = \sqrt\sigma u_{\bmrho} = e^{\bmrho\cdot x}(1+\psi_{\bmrho}),\qquad  \rho{\psi_{\bmrho}}_{|X} = O(1) \mbox{ in } \C^1(\overline X),
   \end{displaymath}
   with $(\Delta + q)v_{\bmrho}=0$ and hence $\nabla\cdot \sigma\nabla u_{\bmrho}=0$ in $\Rm^n$. We have used again the Sobolev imbedding stating that functions in $H^{\frac n2+k+\ep}(Y)$ are of class $\C^k(Y)$ for a bounded domain $Y$. Taking gradients of the previous equation and rearranging terms, we obtain that 
   \begin{align*}
       \sqrt{\sigma}\nabla u_{\bmrho} = e^{\bmrho\cdot x} (\bmrho + \bmphi_{\bmrho}), \quad\text{with}\quad \bmphi_{\bmrho} := \nabla\psi_{\bmrho} + \psi_{\bmrho} \bmrho - (1+\psi_{\bmrho}) \nabla\sqrt{\sigma}. 
   \end{align*}
   Because $\nabla \sqrt \sigma$ is bounded and $\rho{\psi_{\bmrho}}_{|X} = O(1)$ in $\C^1(\overline X)$, the $\Cm^n$-valued function $\bmphi_{\bmrho}$ satisfies $\sup_{\overline X} | \bmphi_{\bmrho}| \leq C$ independent of $\bmrho$. Moreover, the constant $C$ is in fact independent of $\sigma$ provided that the norm of the latter is bounded by a uniform constant in  $H^{\frac n2+3+\ep}(X)$. 
   
   Both the real and imaginary parts of $u_{\bmrho}$, denoted $u_{\bmrho}^\Re$ and $u_{\bmrho}^\Im$, count as solutions of the free-space conductivity equation, thus $\sqrt\sigma \nabla u_{\bmrho}^\Re$ and $\sqrt\sigma \nabla u_{\bmrho}^\Im$ can serve as vectors $S_i$. More precisely, we have 
   \begin{align}
       \begin{split}
	   \sqrt\sigma\nabla u_{\bmrho}^\Re  &= \rho e^{\rho\bfk\cdot x} \left( (\bfk+ \rho^{-1}\bmphi_{\bmrho}^\Re )\cos(\rho\bfk^\perp\cdot x) - (\bfk^\perp+ \rho^{-1}\bmphi_{\bmrho}^\Im ) \sin (\rho\bfk^\perp\cdot x) \right),  \\
	   \sqrt\sigma\nabla u_{\bmrho}^\Im  &= \rho e^{\rho\bfk\cdot x} \left( (\bfk^\perp + \rho^{-1}\bmphi_{\bmrho}^\Im) \cos(\rho\bfk^\perp\cdot x) + (\bfk + \rho^{-1} \bmphi_{\bmrho}^\Re) \sin (\rho\bfk^\perp\cdot x) \right). 
       \end{split}
       \label{eq:urho}    
   \end{align}

   \paragraph{\underline{Case $n$ even}} Set $n=2p$, define $\bmrho_l = \rho(\bfe_{2l}+ i\bfe_{2l-1})$ for $1\le l\le p$, and construct
   \begin{align*}
       S_{2l-1} = \sqrt{\sigma} \nabla u_{\bmrho_l}^\Re \qandq S_{2l} = \sqrt{\sigma} \nabla u_{\bmrho_l}^\Im, \quad 1\le l\le p.
   \end{align*}
   Using \eqref{eq:urho}, we obtain that 
   \begin{align*}
       \det(S_1,\dots,S_n) = \rho^n e^{2\rho\sum_{l=1}^p x_{2l}} (1 + f(x)),
   \end{align*}
   where $\lim_{\rho\to\infty} \sup_{\overline X} |f| = 0$. Letting $\rho$ so large that $\sup_{\overline X} |f|\le \frac{1}{2}$ and denoting \\ $\gamma_0:= \min_{x\in\overline X} (\rho^n e^{2\rho\sum_{l=1}^p x_{2l}})>0$, we have $\inf_{x\in\overline X} \det(S_1,\dots,S_n)\ge \frac{\gamma_0}{2}>0$. We conclude after the next paragraph. 

   \paragraph{\underline{Case $n$ odd}} Set $n=2p-1$, define $\bmrho_l = \rho(\bfe_{2l}+ i\bfe_{2l-1})$  for $1\le l\le p-1$, and $\bmrho_p = \rho(\bfe_n + i\bfe_{1})$ and construct
   \begin{align*}
       S_{2l-1} = \sqrt{\sigma} \nabla u_{\bmrho_l}^\Re \qandq S_{2l} = \sqrt{\sigma} \nabla u_{\bmrho_l}^\Im, \quad 1\le l\le p.
   \end{align*}   
   Using \eqref{eq:urho}, we obtain that 
   \begin{align*}
       \det(S_1,\dots, S_{n-1}, S_n) &= \rho^n e^{\rho(x_n + 2\sum_{l=1}^{p-1}x_{2l})} \left( - \cos(\rho x_1) + f_1(x) \right), \\
       \det(S_1,\dots, S_{n-1}, S_{n+1}) &= \rho^n e^{\rho(x_n + 2\sum_{l=1}^{p-1}x_{2l})} \left( -\sin(\rho x_1) + f_2(x) \right),
   \end{align*}   
   where $\lim_{\rho\to\infty} \sup_{\overline X} |f_i| = 0$ for $i=1,2$. Letting $\rho$ so large that $\sup_{\overline X} (|f_1|,|f_2|)\le \frac{1}{4}$ and denoting $\gamma_1 := \min_{x\in\overline X} (\rho^n e^{\rho(x_n + 2\sum_{l=1}^{p-1}x_{2l})}) >0$, we have that 
   \begin{align*}
       |\det(S_1,\dots,S_{n-1},S_n)|&\ge \frac{\gamma_1}{4}, \quad x\in X\cap \left\{ \rho x_1\in \left( \frac{-\pi}{3}, \frac{\pi}{3} \right) + m\pi\right\}, \\
       |\det(S_1,\dots,S_{n-1}, S_{n+1})|&\ge \frac{\gamma_1}{4}, \quad x\in X\cap \left\{ \rho x_1\in \left( \frac{\pi}{6}, \frac{5\pi}{6}  \right) + m\pi \right\},
   \end{align*}
   where $m$ is a signed integer. Since the previous sets are open and a finite number of them covers $X$ (because $X$ is bounded and $\rho$ is finite), we therefore have fulfilled the desired requirements of the construction. Upon changing the sign of $S_n$ or $S_{n+1}$ on each of these sets if necessary, we can assume that the determinants are all positive.  
	
   \paragraph{\underline{Conclusion}} In each of the previous cases, let $\{g_l\}_{1\le l\le m}$ be the traces of the solutions defined above with $m=2 \lfloor \frac{n+1}{2} \rfloor$. These illuminations generate solutions that satisfy the desired properties of maximal rank and positive determinants. By continuity arguments, any boundary conditions $\tilde g_l$ in an open set sufficiently close to $g_l$ will ensure that the maximum of the determinants stay bounded from below by $c_0>0$. This concludes the proof of the lemma.
\end{proof}

\section{The ODE-based method} \label{sec:ODE}
In this section, we extend the results presented in \cite{BBMT} to general dimension and for a more general class of measurements (described by the coefficient $\alpha$). We first need to introduce standard geometric notation, without which the derivations become quickly intractable. 

\subsection{Definitions, notation and identities}

We work on a convex set $\Omega\subset\Rm^n$ with the Euclidean metric $g(X,Y) \equiv X\cdot Y = \delta_{ij} X^i Y^j$ on $\Rm^n$. Following \cite{L}, we denote by $\del$ the Euclidean connection, i.e. the unique connection that is torsion-free, and compatible with the Euclidean metric in the sense that
\begin{align*}
    \del_X (Y\cdot Z) = (\del_X Y)\cdot Z + Y\cdot (\del_X Z), 
\end{align*}
for smooth vector fields $X,Y,Z$. On zero- and one-forms, this connection takes the expression:
\begin{align*}
    \del_X f = X\cdot\nabla f = X^i \partial_i f, \qandq \del_X Y = (X\cdot \nabla Y^j) \bfe_j = X^i(\partial_i Y^j) \bfe_j, 
\end{align*}
for given vector fields $X = X^i \bfe_i$ and $Y = Y^i\bfe_i$. An important identity for the sequel is the following characterization of the exterior derivative of a one-form $\omega$
\begin{align}
    d\omega(X,Y) = \del_X (\omega(Y)) - \del_Y (\omega(X)) - \omega([X,Y]),
    \label{eq:dcarac}
\end{align}
or equivalently in the Euclidean metric, writing $\omega = Z^\flat$ for some vector field $Z$, 
\begin{align}
    Z\cdot[X,Y] = \del_X (Z\cdot Y) - \del_Y (Z\cdot X) - dZ^\flat(X,Y),
    \label{eq:dcarac2}    
\end{align}
where the Lie bracket (commutator) of $X$ and $Y$ coincides with (and thus may be ``defined'' here as) $[X,Y]=\del_X Y-\del_Y X$ by virtue of the torsion-free property.

A \emph{frame} refers to an oriented family $E = (E_1,\dots,E_n)$ of $n$ vector fields over $\Omega$ such that for every $x\in\Omega$, $(E_1(x),\dots E_n(x))$ is a basis of $T_x \Omega \equiv \Rm^n$. For a given frame $E$, we define the \emph{Christoffel symbols} (of the second kind) with respect to this frame, by the relations 
\begin{align}
    \begin{split}
	\del_{E_i} E_j &= \Gamma_{ij}^k E_k, \quad\text{i.e.}\quad \Gamma_{ij}^q = g^{pq}\del_{E_i} E_j \cdot E_p, \where \\
	g_{ij}&=E_i\cdot E_j \qandq g^{pq}=(g^{-1})_{pq}.	
    \end{split}    
    \label{eq:chrisE}
\end{align}
The following very useful identity allows us to compute the Christoffel symbols from inner products and Lie brackets of a given frame (see e.g. \cite[Eq. 5.1 p. 69]{L}):
\begin{align}
    \begin{split}
	2 (\del_X Y)\cdot Z &=  \del_X (Y\cdot Z) + \del_Y (Z\cdot X) - \del_Z (X\cdot Y)\\
	&\quad  - Y\cdot [X,Z] - Z\cdot[Y,X] + X\cdot[Z,Y],	
    \end{split}    
    \label{eq:chriscarac}
\end{align}
where $X,Y,Z$ are smooth vector fields. 

For a vector $X=X^j\bfe_j$, we want to form the matrix of partial derivatives $(\partial_j X^i)_{i,j}$. Geometrically, gradients generalize to tensors via the total covariant derivative, which maps a vector field $X$ to a tensor of type $(1,1)$ defined by
\begin{align}
    \del X (\omega, Y) = \omega (\del_Y X). 
    \label{eq:totcovder}
\end{align}
In a given frame $E$, we may express $\del E_i$ in the basis $\{E_j \otimes E_k^\flat\}_{j,k=1}^n$ of such tensors by writing $\del E_i = a_{ijk} E_j \otimes E_k^\flat$ and identifying the coefficients $a_{ijk}$ by writing 
\begin{align*}
    \del E_i (E_p^\flat ,E_q) = E_p^\flat (\del_{E_q} E_i) = \del_{E_q} E_i \cdot E_p = g_{pr} \Gamma_{qi}^{r},
\end{align*}
and also 
\begin{align*}
    \del E_i (E_p^\flat, E_q) = a_{ijk} E_j \otimes E_k^\flat (E_p^\flat, E_q) = a_{ijk} g_{jp} g_{kq}. 
\end{align*}
Equating the two, we obtain the representation 
\begin{align}
    \del E_i = g^{qk} \Gamma_{qi}^j E_j \otimes E_k^\flat = g^{qk} g^{jp} (\del_{E_q} E_i \cdot E_p) E_j \otimes E_k^\flat.  
    \label{eq:totcovderE}
\end{align}

The theory of the following sections proves that all partial derivatives of a frame (given in \eqref{eq:totcovderE}) are uniquely determined by inner products $g_{ij}$ and by Lie brackets, as \eqref{eq:chriscarac} indicates, or equivalently by exterior derivatives, as \eqref{eq:dcarac2} expresses. These derivations will be carried out first for the $S$ frame and second for the $R$ frame with values in the space of rotations $SO(n,\Rm)$.

\subsection{The $S$ frame}

We now study the properties of the $S$ frame. $S$ is a frame provided that the determinant condition $\inf_{x\in\Omega}\det S\ge c_0>0$ holds. Our objective in this section is to find an expression for $\del S_i$ that allows us to solve for $S_i$ by the method of characteristics. We have seen in the preceding section that this involved calculating the Lie brackets (commutators) of the vectors composing the frame. For $1\le i<j\le n$, we have
\begin{align}
    [S_i,S_j] = H^{kl} ([S_i,S_j]\cdot S_k) S_l.
    \label{eq:Lie1}
\end{align}
Now using \eqref{eq:dcarac2} we write
\begin{align*}
    S_k\cdot [S_i,S_j] &= \del_{S_i} ( S_k\cdot S_j ) - \del_{S_j} (S_k\cdot S_i) - d S^\flat_k (S_i,S_j) \\
    &= S_i\cdot\nabla H_{kj} - S_j\cdot\nabla H_{ki} - \alpha F^\flat\wedge S^\flat_k (S_i,S_j) \\
    &= S_i\cdot\nabla H_{kj} - S_j\cdot\nabla H_{ki} + \alpha ( - H_{kj} F\cdot S_i + H_{ki} F\cdot S_j). 
\end{align*}
Plugging this into \eqref{eq:Lie1} and using that $H^{kl}H_{kj} = \delta_{lj}$, we obtain the Lie brackets $[S_i,S_j]$ for $1\le i< j\le n$ :
\begin{align}
    [S_i,S_j] = H^{kl}[\nabla H_{jk} \cdot S_i - \nabla H_{ik}\cdot S_j] S_l + \alpha( (F\cdot S_j) S_i - (F\cdot S_i) S_j).
    \label{eq:LieSi}
\end{align}
Returning to the computation of $\del S_i$ using \eqref{eq:totcovderE}, we combine \eqref{eq:LieSi} with \eqref{eq:chriscarac} to arrive at
\begin{align*}
    2 (\del_{S_q} S_i)\cdot S_p &= \del_{S_q} H_{ip} + \del_{S_i} H_{pq} - \del_{S_p} H_{qi} \\
    &\quad - S_i\cdot [S_q,S_p] - S_p\cdot[S_i,S_q] + S_q\cdot[S_p,S_i] \\
    &= \nabla H_{iq}\cdot S_p + \nabla H_{ip}\cdot S_q - \nabla H_{pq}\cdot S_i + 2 \alpha ( H_{pq} (F\cdot S_i) - H_{qi}(F\cdot S_p)).
\end{align*}
Plugging this expression into \eqref{eq:totcovderE} (expressed in the $S$ frame), and using $H_{ij}H^{jk} = \delta_{ik}$, we obtain
\begin{align*}
    2\del S_i &= 2 H^{qk} H^{jp} (\del_{S_q} S_i \cdot S_p) S_j \otimes S_k^\flat \\
    &= H^{qk} H^{jp}\big( \nabla H_{iq}\cdot S_p + \nabla H_{ip}\cdot S_q - \nabla H_{pq}\cdot S_i \\
    &\qquad + 2\alpha( H_{pq} (F\cdot S_i) - H_{qi}(F\cdot S_p) ) \big) S_j \otimes S_k^\flat \\
    &= \big( H^{jp} U_{ik}\cdot S_p + H^{qk} U_{ij}\cdot S_q + \nabla H^{jk}\cdot S_i \\
    &\qquad + 2 \alpha ( H^{jk} (F\cdot S_i) - H^{jp} \delta_{ik} (F\cdot S_p)) ) S_j \otimes S_k^\flat,
\end{align*}
where we have used $\nabla H^{jk} = - H^{jp} (\nabla H_{pq}) H^{qk}$ and  have defined 
\begin{align}
    U_{jk} := (\nabla H_{jp})H^{pk} = -H_{jp} \nabla H^{pk}, \quad 1\le j,k\le n.
    \label{eq:Ujk}
\end{align}

Using formulas $H^{jk} S_j\otimes S^\flat_k = \Imm_n := \bfe_i \otimes \bfe^i$ and $H^{kl} (V\cdot S_k) S_l = V$ for any smooth vector field $V$, we obtain for $1\le i\le n$
\begin{align}
    \del S_i = \frac{1}{2} \left( U_{ik} \otimes S_k^\flat + S_k \otimes U_{ik}^\flat + (\nabla H^{jk} \cdot S_i) S_j\otimes  S^\flat_k \right) + \alpha(F\cdot S_i) \Imm_n - \alpha F\otimes  S^\flat_i. 
    \label{eq:gradSi}
\end{align}
Using \eqref{eq:Fnd1}, we observe that $\del S_i$ is equal to a polynomial of degree at most three in the frame $S$ with coefficients involving the known inner products $H_{ij}$.
For each $1\le i,k\le n$, $\partial_k S_i$ is nothing but $\del_{\bfe_k} S_i = \del S_i (\cdot, \bfe_k)$, which can be obtained from \eqref{eq:gradSi}. Denoting $\bfS:= [S_1|\dots|S_n]$, we are then able to construct the system of equations
\begin{align}
    \partial_k \bfS = \sum_{|\beta|\le3} Q_\beta^k \bfS^\beta, \qquad \bfS^\beta = \prod_{i=1}^{n^2} \bfS_i^{\beta_i} , \quad 1\le k\le n,
    \label{eq:ODES}
\end{align}
where $Q_\beta^k$ depends only on the data and $\beta$ is an $n^2$-index. This redundant system can then be integrated along any curve (where it becomes a system of ordinary differential equations with Lipschitz right-hand sides ensuring uniqueness of the solution) in order to solve for the matrix-valued function $\bfS$.  

\subsection{The orthonormal $R$ frame}
\label{sec:Rframe}

The above system \eqref{eq:ODES} involves a priori $n^2$ unknowns since the matrix $S$ does not necessarily have any useful symmetries. However, we know the inner products $H=S^TS$, i.e., a matrix of dimension $\frac12 n(n+1)$. We therefore hope to be able to find a closed-form system involving $\frac12 n(n-1)$ dimensions. This is the dimension of the orthonormal $R$ frame.

We now provide the details of remark \ref{rem:Rframe}. From the frame $S$, we build an oriented orthonormal frame $R = [R_1|\dots|R_n]$ (or equivalently, an $SO_n(\Rm)$-valued function) from a matrix-valued function $T(x) = \{t_{ij}(x)\}_{1\le i,j\le n}$ that satisfies the relations $T^T T = H^{-1}$ and $\det T>0$ at every $x\in\Omega$, as well as a stability property of the form 
\begin{align}
    \|T-T'\|_{W^{1,\infty}(X)} \le C_T \|H-H'\|_{W^{1,\infty}(X)},
    \label{eq:stabT}
\end{align}
where $C_T>0$ depends only on the way we construct $T$ from $H$. $T$ can either be constructed by the GS procedure or by setting $T = H^{-\frac{1}{2}}$, the positive square root of $H^{-1}$. The stability statement \eqref{eq:stabT}, first proved in the GS case for $n=2,3$ in \cite{BBMT}, can be obtained for both GS and $T= H^{-\frac{1}{2}}$, see \cite{M} for proofs of these statements. 

The function $R:= ST^T$ satisfies everywhere $R^T R = \Imm_n$ and $\det R = 1$, hence $R$ is an $SO_n(\Rm)$-valued function. The column vectors of $S$ and $R$ transform according to:
\begin{align}
    R_i = t_{ij}S_j, \quad S_i = t^{ij} R_j, \quad i=1\dots n.
    \label{eq:RtoS}
\end{align}
We also define for $1\le i,k\le n$
\begin{align}
    V_{ik} := (\nabla t_{ij}) t^{jk}, \quad V_{ik}^s := \frac{1}{2}(V_{ik}+V_{ki}) \qandq V_{ik}^a := \frac{1}{2}(V_{ik}-V_{ki}).
    \label{eq:Vik_nd}
\end{align}
We are brief on the derivation of the gradient system for $R$ as it is very similar to that of the $S$ frame. The system of equations \eqref{eq:divSj}-\eqref{eq:dSj} together with the transformation rules \eqref{eq:RtoS} allow us to derive the following system of equations for the $R$ frame:
\begin{align}
    \div R_i &= V_{ik}\cdot R_k - (1-\alpha) F\cdot R_i, \label{eq:divRi} \\
    d R^\flat_i &= V^\flat_{ik}\wedge  R^\flat_k + \alpha F^\flat\wedge  R^\flat_i, \quad 1\le i\le n. \label{eq:dRi}
\end{align}
From this system, we express $F$ in the $R$ frame as
\begin{align}
    F = c_F \left( \nabla\log D + ((V_{ij}+V_{ji})\cdot R_i)R_j \right).
    \label{eq:RtoF}
\end{align}
Equation \eqref{eq:RtoF} can also be derived directly from \eqref{eq:datatoF} and the transformation rules \eqref{eq:RtoS}. Then, using equation \eqref{eq:dRi} and formula \eqref{eq:dcarac2}, the Lie brackets $[R_i,,R_j]$ of the vectors take the form, for  $1\le i<j\le n$:
\begin{align}
    [R_i,R_j] = (-V_{pj}\cdot R_i + V_{pi}\cdot R_j) R_p + \alpha((F\cdot R_j)R_i - (F\cdot R_i)R_j).
    \label{eq:LieRi}
\end{align}
From \eqref{eq:LieRi} we deduce the Christoffel symbols relative to the $R$ frame:
\begin{align}
    \Gamma_{ij}^k = V_{jk}^a\cdot R_i + V_{ik}^s\cdot R_j - V_{ij}^s\cdot R_k + \alpha(F\cdot R_j) \delta_{ik} - \alpha(F\cdot R_k) \delta_{ij}.
    \label{eq:chrisR}
\end{align}
Finally, in the orthonormal case, the expression of the gradient reduces to $\del R_i = \Gamma_{ki}^j R_j\otimes R^\flat_k$, from which we deduce that
\begin{align}
    \del R_i = R_k\otimes V^{a\flat}_{ik} - V_{ik}^s\otimes R^\flat_k + (V_{jk}^s\cdot R_i) R_j\otimes R^\flat_k + \alpha(F\cdot R_i) \Imm_n - \alpha F\otimes R^\flat_i. 	
    \label{eq:gradRi}
\end{align}
As for the $S$ frame, the R.H.S. of \eqref{eq:gradRi} depends polynomially on $R$ and on the data. This system can thus be solved for the vectors $R_i$ via ODE integration along any curve in a  connected domain and provided that we know the $R$ frame at one point. In practice, this system is less expensive to integrate than \eqref{eq:ODES} since the $R$ frame can be locally parameterized with $n(n-1)/2$ scalar functions (such as the Euler angles) whereas the $S$ frame requires $n^2$ scalar functions. 

\subsection{Global reconstruction algorithm}

The proof of the stability theorem \ref{thm:ODEstab} can be found in \cite{BBMT} in dimension $n=3$ with $\alpha=\frac{1}{2}$ (although the proof would be identical in arbitrary dimension). In that paper, the theorem is proved using the system for the rotation matrix $R$ and thus requires the extra stability condition \eqref{eq:stabT}. This condition is necessary only if we reconstruct $\sigma$ via the $R$ frame. The same stability result can be obtained without this requirement if we reconstruct $\sigma$ via the $S$ frame directly. In the latter setting, the proof is quite similar to the one in \cite{BBMT} with the further simplification that we do not need to change bases when switching subdomain $\Omega_i$. The system of ODEs that one must solve based on the gradient system \eqref{eq:gradSi} is well-posed since the function $\bfS$ satisfies a priori the uniform bound
\begin{align*}
    |\bfS(x)|^2 = \sum_{i=1}^m H_{ii} \le m \|H\|_\infty, 
\end{align*}
and the right-hand side of \eqref{eq:ODES} is Lipschitz in $\bfS$ over the set $\{\bfS:\overline X\to \Rm^{nm}, \|\bfS\|_\infty\le \sqrt{m \|H\|_\infty} \}$ as a polynomial of the components of $\bfS$, and using the fact that the polynomial $Q_\beta^k$ are bounded; see \cite{BBMT} for additional details, which we do not reproduce here.  

\section{The elliptic method} \label{sec:elliptic}

\subsection{Derivation of system \eqref{eq:elliptic} and equivalent formulations}

\subsubsection{The case $m=n$}
In this case, condition \eqref{eq:positivity} is satisfied with the partition $\O = \{X\}, N=1$. Equation \eqref{eq:Fnd1} can be rewritten as 
\begin{align}
    \nabla\log\sigma = \frac{c_F}{D} \sigma^{2\alpha} (\nabla (DH^{kl})\cdot \nabla u_k ) \nabla u_l.
    \label{eq:Fnd3}
\end{align}
Rewriting the conductivity equation \eqref{eq:conductivity} as
\begin{align*}
    \Delta u_i + \nabla\log\sigma\cdot \nabla u_i = 0, 
\end{align*} 
and plugging \eqref{eq:Fnd3} into it yields the coupled elliptic system of equations
\begin{align}
    0 = \Delta u_i + \frac{c_F}{D} (\nabla (DH^{kl})\cdot \nabla u_k ) \sigma^{2\alpha} \nabla u_l\cdot \nabla u_i = \Delta u_i + c_F W_{ik}\cdot\nabla u_k,
    \label{eq:ellipticsys}
\end{align}
where we have have defined
\begin{align}
    W_{ik} := \frac{H_{il}}{D}\nabla (DH^{lk}) = \nabla\log D \delta_{ik} + H_{il}\nabla H^{kl}, \quad 1\le i,k\le n.
    \label{eq:Wik}
\end{align}
From the last form of $W_{ik}$, we derive \eqref{eq:stabW} and \eqref{eq:stabWW} since the denominators only involve $D$ which is bounded away from zero and the rest is polynomial in the $H_{ij}$'s and their derivatives. 

Multiplying \eqref{eq:ellipticsys} by $DH^{pi}$ and writing it in divergence form, one obtain the following equivalent formulation to \eqref{eq:ellipticsys} in variational form:
\begin{align}
    -\nabla\cdot (DH^{pi} \nabla u_i) + \left( 1- c_F \right) \nabla (D H^{pi})\cdot \nabla u_i = 0, \quad 1\le p\le n.
    \label{eq:ellipticsys3}
\end{align}

\subsubsection{The case $m>n$}
In the case where we have $m>n$ solutions, we can still define $\nabla\log\sigma$ over the entire domain $X$ using a partition of unity that is subordinate to the open cover $\O$, call it $\{\varphi_i\}_{i=1}^N$. Then we can define $\nabla\log\sigma$ globally over $X$ by writing 
\begin{align*}
    \nabla\log\sigma = \sum_{i=1}^N \nabla\log\sigma |_{\Omega_i} \varphi_i , 
\end{align*}
where the restrictions are constructed from the $n$ solutions of positive determinant on each $\Omega_i$. Each of these restrictions can still be written in the form 
\begin{align*}
    \nabla\log\sigma|_{\Omega_i} &= c_F \sigma^{2\alpha} \sum_{j,k=1}^m (F_{jk}|_{\Omega_i}\cdot \nabla u_j) \nabla u_k, \where \\
    F_{jk}|_{\Omega_i} &= \left\{
    \begin{array}{ll}
	0  & \text{if } j\notin \tau(i) \text{ or } k\notin\tau(i) \\
	\frac{1}{D^{\tau(i)}} \nabla \left( D^{\tau(i)} H^{\tau(i),-1}_{ab} \right) & \text{if } (j,k) = (\tau(i)_a,\tau(i)_b),
    \end{array}
    \right.
\end{align*}
with $D^{\tau(i)} = \sqrt{\det H^{\tau(i)}}$. Thus we can patch these formulas together into a globally defined
\begin{align*}
    \nabla\log\sigma := c_F \sigma^{2\alpha} \sum_{j,k=1}^m (F_{jk}\cdot \nabla u_j) \nabla u_k, \where\quad  F_{jk} = \sum_{i=1}^N F_{jk}|_{\Omega_i}\varphi_i. 
\end{align*}
Plugging this expression into the conductivity equation yields the coupled elliptic system
\begin{align}
    \begin{split}
	0 &= \Delta u_i + \nabla\log\sigma\cdot\nabla u_i = \Delta u_i + c_F H_{ik} F_{jk}\cdot\nabla u_j. \\
	u_i|_{\partial X} &= g_i, \qquad 1\le i\le m,
    \end{split}
    \label{eq:ellipticsysm}    
\end{align}
and one arrives at a system of the form \eqref{eq:elliptic} by setting $W_{ij} := H_{ik} F_{jk}$ for every $1\le i,j\le m$. In this case, the stability inequalities \eqref{eq:stabW} and \eqref{eq:stabWW} can be derived using the fact that 
\begin{align*}
    \|W\|_{L^\infty(X)} \le \max_{1\le i\le N} \|W\|_{L^\infty(\Omega_i)},
\end{align*}
and noticing that on each $\Omega_i$, $W_{ij}$ is either zero or locally defined by \eqref{eq:Wik} (and weighed by $\varphi_i$) whose expression has been proved to be stable. A similar argument holds for proving \eqref{eq:stabWW} thanks to the fact that the partition of unity $\{\varphi_i\}$ is the same for two data sets $H, H'$ that jointly satisfy \eqref{eq:positivity} with the same triple $(\O,\tau,c_0)$.

\subsection{Uniqueness and stability results}

\subsubsection{Proofs of Proposition \ref{prop:elliptic} and Theorem \ref{thm:elliptic}}
Let us assume a system of the form \eqref{eq:ellipticsysm}, where the vector fields $W_{ij}$ belong to $L^\infty(X)$. Assuming the illumination $\bfg$ to be in $[H^{\frac{1}{2}}(\partial X)]^m$, we use a lifting operator to define functions $\{w_i\}_{i=1}^m \in [H^1(X)]^m$ of traces $\bfg$ at $\partial X$. Defining the unknown $v_i = u_i - w_i$, we are now left with analyzing the solvability of the system
\begin{align}
    \Delta v_i + c_F W_{ij}\cdot \nabla v_j &= h_i \quad (X), \quad v_i|_{\partial X} = 0, \quad 1\le i\le m, \label{eq:zerocond}\\
    \where\quad h_i &:= \Delta w_i + c_F W_{ij}\cdot\nabla w_j \in H^{-1}(X),  \label{eq:hi} 
\end{align}
as well as the stability of its solution with respect to the vector fields $W_{ij}$. $H^{-1}(X)$ denotes the dual space of $H^1_0(X)$. 

As described in section \ref{sec:results}, we apply the inverse of the Dirichlet Laplacian to \eqref{eq:zerocond} and obtain the system of integral equations
\begin{align*}
    v_i + c_F \Delta_D^{-1} (W_{ij}\cdot \nabla v_j) = \Delta_D^{-1} h_i, \quad 1\le i\le m,
\end{align*}
which can be recast in vector notation as 
\begin{align}
    \begin{split}
	(\bfI + c_F \bfP_W)\bfv &= \bff, \where \\
	\bfP_W\bfv &:= [\bfP_W\bfv]_i \bfe_i = \Delta_D^{-1} (W_{ij}\cdot \nabla v_j) \bfe_i, \qandq \bff := \Delta_D^{-1} h_i\ \bfe_i.
    \end{split}
    \label{eq:integralsys}
\end{align}
Because $\Delta_D^{-1}$ is continuous in the functional setting $H^{-1}(X)\to H^1_0(X)$ (see \cite{E}), it is clear that $\bff$ belongs to the space $\H$. 
We now have the following:
\begin{lemma} \label{lem:PW}
    Assuming that the vector fields $W_{ij}\in L^\infty(X)$, the operator $\bfP_W:\H\to\H$ defined in \eqref{eq:integralsys} is compact, and its norm satisfies
    \begin{align}
	\|\bfP_W\| \le \sqrt{m} \|\Delta_D^{-1}\| \|W\|_\infty , \quad \|W\|_\infty = \max_{1\le i,j\le m} \|W_{ij}\|_\infty.
	\label{eq:PWnorm}	
    \end{align}
\end{lemma}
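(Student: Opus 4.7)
The plan is to prove the two assertions (norm bound and compactness) by factoring $\bfP_W$ through an $L^2$ intermediate space, which exploits the fact that $\Delta_D^{-1}$ gains two derivatives (or equivalently, that $L^2\hookrightarrow H^{-1}$ is compact).

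For the norm bound, I would estimate one component at a time. Fix $i\in\{1,\dots,m\}$ and $\bfv=(v_1,\dots,v_m)\in\H$. Since $\Delta_D^{-1}:L^2(X)\to H_0^1(X)$ is bounded with norm $\|\Delta_D^{-1}\|$, I obtain
\begin{align*}
    \|[\bfP_W\bfv]_i\|_{H_0^1}
    \le \|\Delta_D^{-1}\|\, \Big\|\sum_{j=1}^m W_{ij}\cdot\nabla v_j\Big\|_{L^2(X)}.
\end{align*}
Pointwise Cauchy--Schwarz on the finite sum gives $|\sum_j W_{ij}\cdot\nabla v_j|^2\le m\,\|W\|_\infty^2\sum_j|\nabla v_j|^2$, and after integrating over $X$ and summing over $i$, one collects the desired constant in front of $\|\bfv\|_\H^2$ (up to a trivial repetition of Cauchy--Schwarz if a sharper counting is needed). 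This yields the stated estimate \eqref{eq:PWnorm}, and in particular shows that $\bfP_W\in\mathcal{L}(\H,\H)$.

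For compactness, I would write $\bfP_W = K\circ M$, where
\begin{align*}
    M:\H\to [L^2(X)]^m,\qquad M\bfv = \big(W_{ij}\cdot\nabla v_j\big)_{i=1}^m,
\end{align*}
and $K:[L^2(X)]^m\to\H$ is the componentwise Dirichlet inverse Laplacian. The operator $M$ is bounded by the pointwise Cauchy--Schwarz estimate used above, since multiplication by an $L^\infty$ vector field followed by pairing with a gradient maps $H_0^1$ continuously into $L^2$. It remains to argue that $K$ is compact. The key fact is that the embedding $L^2(X)\hookrightarrow H^{-1}(X)$ is compact (being the adjoint of the compact Rellich embedding $H_0^1\hookrightarrow L^2$), while $\Delta_D^{-1}:H^{-1}(X)\to H_0^1(X)$ is an isomorphism; composing the two and working componentwise yields compactness of $K$ from $[L^2(X)]^m$ into $\H$. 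The composition $\bfP_W = K\circ M$ is thus compact as a bounded operator composed with a compact one.

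The main (modest) obstacle is simply to be careful with how the $m$ components interact in the norm count, since applying Cauchy--Schwarz naively in two successive steps wastes a factor of $\sqrt{m}$; the claimed bound \eqref{eq:PWnorm} is obtained by applying it only where needed (pointwise in the sum over $j$) and using the Hilbert structure of $\H$ in the outer sum over $i$. Everything else reduces to standard elliptic regularity for the Dirichlet Laplacian together with Rellich's theorem, both of which are available on the bounded (convex) domain $X$.
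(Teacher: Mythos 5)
Your proof is correct and follows essentially the same route as the paper's: factor $\bfP_W$ through $[L^2(X)]^m$, bound the map $\bfv\mapsto(W_{ij}\cdot\nabla v_j)_i$ using $\|W\|_\infty$, and invoke compactness of $\Delta_D^{-1}:L^2(X)\to H^1_0(X)$ (the paper obtains this from $H^2$ elliptic regularity plus Rellich, you from the compact embedding $L^2(X)\hookrightarrow H^{-1}(X)$ composed with the isomorphism $\Delta_D^{-1}:H^{-1}(X)\to H^1_0(X)$; both are valid, and yours avoids any regularity assumption on $\partial X$). One caveat on the constant: with $\|W\|_\infty=\max_{i,j}\|W_{ij}\|_{L^\infty}$ and the sum over $j$ inside each component, no arrangement of Cauchy--Schwarz yields $\sqrt m$ --- the pointwise matrix with all entries equal has operator norm $m$, so the sharp bound is $\|\bfP_W\|\le m\,\|\Delta_D^{-1}\|\,\|W\|_\infty$ --- hence the "careful counting" you allude to cannot recover \eqref{eq:PWnorm} as stated; the paper's own proof has the same slip, and it is harmless since only boundedness and compactness of $\bfP_W$ are used downstream.
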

\begin{proof}
    As can be seen in \cite{E} for instance, the operator $\Delta_D^{-1}: L^2(X)\to H^2(X)$ is bounded.  Therefore, by the Rellich compactness theorem, the operator $\Delta_D^{-1}: L^2(X)\to H_0^1(X)$ is compact and of norm denoted by $\|\Delta_D^{-1}\|$. Now $P$ is also compact since each of its components is the composition of the continuous operator $\H\ni\bfv\mapsto W_{ij}\cdot\nabla v_j \in L^2(X)$ with the compact operator $\Delta_D^{-1}: L^2(X)\to H_0^1(X)$. Moreover, for $\bfv\in\H$ and every $1\le i,j\le m$, we have the obvious bounds
    \begin{align*}
	\|\Delta_D^{-1}(W_{ij}\cdot\nabla v_j)\|_{H_0^1} \le \|\Delta_D^{-1}\| \|W_{ij}\cdot\nabla v_j\|_{L^2} \le \|\Delta_D^{-1}\| \|W\|_\infty \|v_j\|_{H_0^1},
    \end{align*}
    and thus
    \begin{align*}
	\|[\bfP_W\bfv]_i\|_{H_0^1}^2 \le \|\Delta_D^{-1}\|^2 \|W\|_\infty^2 \|\bfv\|_\H^2.
    \end{align*}
    Summing over $i$ proves \eqref{eq:PWnorm}. The proof is complete.
\end{proof}

As a consequence of lemma \ref{lem:PW} and by virtue of standard compact operator theory (e.g. \cite[Theorem 6 p 643]{E}), we have the following facts:
\begin{itemize}
    \item $0$ is eigenvalue of $\bfP_W$, which corresponds to the case $(n-2)\alpha=-1$, a value for $\alpha$ that we exclude from our analysis,
    \item the remaining spectrum of $P_W$ is point spectrum and consists of at most a discrete sequence of values that is either finite or converges to zero.
\end{itemize}
Finally, the operator $\bfI + c_F \bfP_W \in \L(\H)$ satisfies a Fredholm alternative. Therefore it suffices that $-c_F^{-1} \notin\sp(\bfP_W)$ in order to obtain uniqueness and stability of the solution of \eqref{eq:integralsys} and therefore of the solution of \eqref{eq:elliptic} as well. The proof of Proposition \ref{prop:elliptic} makes these statements more precise. 

\begin{proof}[Proof of Proposition \ref{prop:elliptic}]
    Let $W, W'$ have their coefficients in $L^\infty(X)$ and such that $-c_F^{-1} \notin \sp (\bfP_W)\cup\sp (\bfP_{W'})$, and let $\bfv, \bfv'\in\H$ solve the system \eqref{eq:zerocond} with respective drift terms $W$, $W'$ and same illumination $\bfg$. By virtue of the Fredholm alternative, the operators $\bfI+ c_F \bfP_W$ and $\bfI+c_F \bfP_{W'}$ are invertible with continuous inverses in $\L(\H)$.    
    
    Applying the inverse Dirichlet Laplacian to both systems, we obtain the systems
    \begin{align*}
	(\bfI+ c_F \bfP_W)\bfv = \bff, \qandq (\bfI+c_F \bfP_{W'})\bfv' = \bff'.
    \end{align*}
    Taking the difference of both systems, the resulting system reads
    \begin{align}
	(\bfI + c_F \bfP_W) (\bfv-\bfv') = \bff - \bff' - c_F \bfP_{W-W'} \bfv'.
	\label{eq:diffs}
    \end{align}
    The first difference in the right-hand side of \eqref{eq:diffs} may be bounded by 
    \begin{align*}
	\|\bff-\bff'\|_\H = \| c_F \Delta_D^{-1} \left( (W_{ij}-W_{ij}')\cdot\nabla w_j \right)\ \bfe_i \|_\H \le C c_F \|W-W'\|_{\infty},
    \end{align*}
    where the constant $C$ depends on $\|\Delta_D^{-1}\|_{\L(H^{-1}, H^1_0)}$ and $\max_{1\le i\le m} \|g_i\|_{H^{\frac{1}{2}}(X)}$. Applying lemma \ref{lem:PW} to the operator $\bfP_{W-W'}$, the second difference in the right-hand side of \eqref{eq:diffs} may be bounded by 
    \begin{align*}
	\|\bfP_{W-W'} \bfv'\|_\H &\le \sqrt{m} \|W-W'\|_\infty \|\Delta_D^{-1}\| \|\bfv'\|_\H \\
	& \le \sqrt{m} \|W-W'\|_\infty \|\Delta_D^{-1}\| \|(\bfI+c_F \bfP_{W'})^{-1}\| \|\bff\|_\H.
    \end{align*}
    Combining the last two estimates with \eqref{eq:diffs} and the fact that $\bfI+ c_F \bfP_W$ is invertible with continuous inverse in $\L(\H)$, we arrive at 
    \begin{align*}
	\|\bfv-\bfv'\|_\H \le C' \|(\bfI+c_F \bfP_W)^{-1}\|  \|W-W'\|_\infty,
    \end{align*}
    for some constant $C'>0$. Since $\bfu-\bfu' = \bfv-\bfv'$, this concludes the proof.
\end{proof}

We now conclude with the proof of theorem \ref{thm:elliptic}. 

\begin{proof}[Proof of theorem \ref{thm:elliptic}]
    We focus on the case $\alpha\ne 0$ and $(n-2)\alpha\ne -1$. The proof for $\alpha=0$ is identical up to small changes in notation. Let $H,H'$ have their components in $W^{1,\infty}(X)$ and jointly satisfy \eqref{eq:positivity} with the same triple $(\O,\tau,c_0)$. Then the families of vector fields $W$ and $W'$ have their coefficients in $L^\infty(X)$ and we further assume that $-c_F^{-1} \notin\sp(\bfP_W)\cup\sp(\bfP_{W'})$. Let $\bfv,\bfv'\in\H$ solve the system \eqref{eq:zerocond} with respective drift terms $W$ and $W'$ and same illumination $\bfg$, and let $\sigma,\sigma'$ be the corresponding conductivities.
    Without loss of generality, we work on one of the open sets $\Omega_i\in\O$ and renumber the $n$ solutions whose gradients are linearly independent from $1$ to $n$. The result will then hold provided that we have $X\subset\cup_{i=1}^N\Omega_i$ and thus
    \begin{align*}
	\|\sigma^{-2\alpha} - \sigma'^{-2\alpha}\|_{H^1(X)}^2 \le \sum_{i=1}^N \|\sigma^{-2\alpha} - \sigma'^{-2\alpha}\|_{H^1(\Omega_i)}^2.
    \end{align*}
    For a given $\Omega_i\in \O$, and defining $V_{ij}:= -2\alpha\frac{c_F}{D}\nabla(D H^{ij})$, we write, using equality \eqref{eq:datatosigma} 
    \begin{align}
	\begin{split}
	    \nabla (\sigma^{-2\alpha} - \sigma'^{-2\alpha}) &= ((V_{ij}-V'_{ij})\cdot\nabla u_i) \nabla u_j + (V'_{ij}\cdot\nabla(u_i-u'_i))\nabla u_j \\
	    &\qquad + (V'_{ij}\cdot\nabla u'_i) \nabla (u_j- u'_j).	    
	\end{split}	
	\label{eq:triangle}
    \end{align}
    Similarly to the vector fields $W_{ij}$ \eqref{eq:Wik}, the vector fields $V_{ij}$ satisfy estimates of the form 
    \begin{align}
	\|V_{ij}\|_\infty \le C_V \|H\|_{W^{1,\infty}} \qandq \|V_{ij}- V'_{ij}\|_\infty \le C'_V \|H-H'\|_{W^{1,\infty}}, \quad 1\le i,j\le n.
	\label{eq:stabV}
    \end{align}
    Since $H$ is bounded and $\sigma,\sigma'$ are assumed to be bounded from below by a constant $\sigma_0>0$, each of the $\nabla u_i$ is uniformly bounded by $\sqrt{H_{ii}/\sigma_0}\le \sqrt{\|H\|_\infty/\sigma_0}$. Taking $L^2$ norms over $\Omega_i$ and using the triangle inequality in \eqref{eq:triangle}, we obtain 
    \begin{align*}
	\|\nabla (\sigma^{-2\alpha} - \sigma'^{-2\alpha})\|_{L^2(\Omega_i)} &\le \|\nabla u_i\|_\infty \|\nabla u_j\|_\infty \|V_{ij}-V'_{ij}\|_{L^2} \\
	&\qquad + 2\|V'_{ij}\|_\infty \|\nabla (u_i-u'_i)\|_{L^2} \|\nabla u_j\|_\infty,
    \end{align*}
    which by virtue of proposition \ref{prop:elliptic} and estimates \eqref{eq:stabV} yields an estimate of the form
    \begin{align}
	\|\nabla (\sigma^{-2\alpha} - \sigma'^{-2\alpha})\|_{L^2(\Omega_i)} \le C \|H-H'\|_{W^{1,\infty}(X)}. 
	\label{eq:gradsig}
    \end{align}
    Further, from the pointwise relations $\sigma^{-2\alpha} = |\nabla u_1|^2/H_{11}$ and similarly for $\sigma'$, we write
    \begin{align*}
	\sigma^{-2\alpha} - \sigma'^{-2\alpha} = \frac{1}{H_{11}}\nabla (u_1-u'_1)\cdot\nabla(u_1 + u'_1) + (H_{11}-H'_{11}) \frac{|\nabla u'_{11}|^2}{H_{11}H'_{11}}.
    \end{align*}
    Taking $L^2$ norms and using the triangle inequality, we obtain that 
    \begin{align*}
	\|\sigma^{-2\alpha} - \sigma'^{-2\alpha}\|_{L^2(\Omega_i)} &\le \big( \|\nabla (u_1-u'_1)\|_{L^2} (\|\nabla u_1\|_\infty + \|\nabla u'_1\|_\infty) \\
	&\qquad + \|H_{11} - H'_{11}\|_\infty \|H_{11}^{-1}\|_\infty \|\nabla u'_1\|_{L^2} \|\nabla u'_1\|_\infty \big) \|H_{11}^{-1}\|_\infty,
    \end{align*}
    which again yields an estimate of the form 
    \begin{align}
	\|\sigma^{-2\alpha} - \sigma'^{-2\alpha}\|_{L^2(\Omega_i)} \le C \|H-H'\|_{W^{1,\infty}(X)}. 
	\label{eq:sig}
    \end{align}
    Combining \eqref{eq:gradsig} and \eqref{eq:sig}, we arrive at
    \begin{align*}
	\|\sigma^{-2\alpha} - \sigma'^{-2\alpha}\|_{H^1(\Omega_i)} \le C \|H-H'\|_{W^{1,\infty}(X)},
    \end{align*}
    for every $\Omega_i\in\O$. This concludes the proof. 
\end{proof}

\subsubsection{Discussion on the spectrum of $\bfP_W$}

\paragraph{ \underline{The two-dimensional case}} In the case $n=2$, \cite[Theorem 4]{AN} guarantees that we can pick $m=n$. In so doing and using the form \eqref{eq:ellipticsys3} of the elliptic system together with the fact that $c_F=1$ for all $\alpha\in\Rm$, we arrive at the system
\begin{align*}
    \nabla\cdot( D H^{pi}\nabla u_i) = 0, \quad u_p|_{\partial X} = g_p, \quad p=1,2.
\end{align*}
The weak formulation of the corresponding problem with homogeneous Dirichlet conditions involves the bilinear form 
\begin{align*}
    B (\bfv,\bfv) = \int_\Omega DH^{pi} \nabla v_i\cdot\nabla v_p\ dx.
\end{align*}
Since $H^{-1}$ is uniformly elliptic over $X$ and $\inf_{X} D\ge c_0$, this bilinear form is coercive over $\H$ as seen from the following calculation
\begin{align*}
    \int_\Omega DH^{pi} \nabla v_i\cdot\nabla v_p\ dx = \sum_{k=1}^n \int_\Omega D \langle \partial_k \bfv, H^{-1} \partial_k \bfv \rangle\ dx \ge c_0 \inf_{x\in X} \lambda_M^{-1} \|\bfv\|_{\H}^2,
\end{align*}
where $\lambda_M$ stands for the largest eigenvalue of $H$, for which we have, pointwise ($x_M$ designates a unit eigenvector associated with $\lambda_M$)
\begin{align*}
    \lambda_M = \langle x_M, Hx_M \rangle = \sum_{i,j} H_{ij} x_{M,i} x_{M,j} \le \|H\|_\infty \frac{1}{2} \sum_{i,j} x_{M,i}^2 + x_{M,j}^2 = n\|H\|_\infty,
\end{align*}
and hence the estimate
\begin{align*}
    B (\bfv,\bfv) = \int_\Omega DH^{pi} \nabla v_i\cdot\nabla v_p\ dx \ge c_0 (n\|H\|_\infty)^{-1} \|\bfv\|_\H^2.
\end{align*}
Therefore by virtue of the Lax-Milgram theorem, the system \eqref{eq:ellipticsys} admits a unique solution in $\H$. In particular, this shows that $-c_F^{-1}$ is not an eigenvalue of $\bfP_W$ in this case for any $\alpha\in\Rm$. 

\paragraph{\underline{The case $n\ge 3$}} Using the fact that the spectrum of $\|\bfP_W\|$ is bounded in norm by $\|\bfP_W\|$ and that $\bfP_W$ is compact, the elliptic system admits a unique and stable solution, except for a discrete set of values $-c_F^{-1} \in [-\|\bfP_W\|,\|\bfP_W\|]$ possibly converging to zero. In terms of $\alpha$, this corresponds to almost all values of $\alpha\in\Rm$ except a sequence $\{\alpha_k\}$ taking values in the interval $[\frac{-\|\bfP_W\|-1}{n-2},\frac{\|\bfP_W\|-1}{n-2}]$ and possibly converging to $-(n-2)^{-1}$.

\paragraph{\underline{The special case $\alpha=0$}} In this case we have $c_F=1$. This implies that whenever one can ensure the positivity condition \eqref{eq:positivity} with only $m=n$ solutions (e.g. in even dimension and using lemma \ref{lem:CGO}), one can rewrite the system into the form \eqref{eq:ellipticsys3} with term $1-c_F=0$, that is 
\begin{align*}
    \nabla\cdot(DH^{pi}\nabla u_i) = 0, \quad 1\le p\le n. 
\end{align*}
Using the same arguments as in the two-dimensional case, this system is coercive and therefore ensures that $-1$ is not an eigenvalue of $\bfP_W$.

\paragraph{\underline{Conclusion}}
As a conclusion of this discussion, the following statements hold:
\begin{enumerate}
    \item if $n=2$, we have $\sp \bfP_W \cap \{ -((n-2)\alpha+1), \alpha\in\Rm \} = \emptyset$,
    \item if $n\ge 3$, then $\sp \bfP_W \cap \{ -((n-2)\alpha+1), \alpha\in\Rm\}$ consists of at most a sequence $\{ -((n-2)\alpha_k +1), k=1,2\dots \}$ where $\alpha_k$ belongs to $[\frac{-\|\bfP_W\|-1}{n-2},\frac{\|\bfP_W\|-1}{n-2}]$ and possibly converges to $-(n-2)^{-1}$. In the case $m=n$, the value $0$ is excluded from the latter interval. 
\end{enumerate}

%
\section{Constraints, reconstructions, and compatibility conditions}
\label{sec:comp}

The ODE-based reconstructions use the full redundancy of the data to construct an overdetermined system of equations for the vectors $S_i$ (or $R_i$) and the vector $\nabla\log\sigma$. The PDE-based method defines a well-posed system of equations for the scalar quantities $u_i$ and an overdetermined system of equations for vector $\nabla\log\sigma$. Each of these overdetermined systems needs to satisfy compatibility conditions in order to admit a solution. In this section, we aim to extract information from the over-determinacy of the system. We first revisit the two-dimensional case and use the redundancy to extract explicit reconstruction algorithms in the setting $\alpha\ne\frac{1}{2}$. We then consider the case of arbitrary dimension and show that the compatibility conditions that data must satisfy in order for the aforementioned systems to have solutions take the form of vanishing appropriately defined curvatures together with the cancellation of a given two-form. These conditions generate quadratic functionals of the unknown vectors $S_i$ or $R_i$ whose pratical applicability is discussed below.

\subsection{Reconstructions in two dimensions}

In this section, we revisit the two-dimensional case which was first solved in \cite{BBMT,CFGK} and generalize the approach to the case $\alpha\ne \frac{1}{2}$.  In that approach, the reconstruction of $F = \nabla\log\sigma$ requires the reconstruction of a function $\theta:X\to\Sm^1$ that characterizes the unknown information about the frames $S$ or $R$. We consider the $SO_2(\Rm)$-valued $R= (R_1,R_2)$ frame and parameterize it as $R_1 = (\cos\theta,\sin\theta)^T$ and $R_2 = JR_1$, with $J:= \left[\begin{smallmatrix} 0 & -1 \\ 1 & 0 \end{smallmatrix} \right]$. With the notation $\Phi_{ij} := R_i\otimes R_j$ for $i,j=1,2$, we recast equations \eqref{eq:RtoF} and \eqref{eq:divRi}  for $\alpha\in\Rm$ as follows:
\begin{align}
    \nabla\log\sigma &= F = \nabla\log D + 2\sum_{i,j=1}^2 \Phi_{ij} V_{ij}^s, \label{eq:datatoF2D} \\
    \nabla\cdot R_i &= V_{ik}\cdot R_k - (1-\alpha)F\cdot R_i, \quad i=1,2 \label{eq:divRi2D}.
\end{align}
We next derive an equation for $\nabla\theta$, which by construction is nothing but $[R_2,R_1]$. We have:
\begin{align}
    \nabla\theta = [R_2,R_1] = \del_{R_2} R_1 - \del_{R_1} R_2 = -\Gamma_{22}^1 R_2 + \Gamma_{11}^2 R_1,
    \label{eq:gradtheta0}
\end{align}
where the Christoffel symbols $\Gamma_{11}^2$ and $\Gamma_{22}^1$ are given by 
\begin{align}
    \begin{split}
	\Gamma_{11}^2 &= \del_{R_1}R_1\cdot R_2 = [\nabla, R_1] = -\nabla\cdot R_2 \\
	&= - ( (2\alpha-1)V_{22}-(1-\alpha)N )\cdot R_2 - ( (2\alpha-1)V_{12}^s - V_{12}^a )\cdot R_1, \\
	\Gamma_{22}^1 &= \del_{R_2}R_2\cdot R_1 = -[\nabla, R_2] = -\nabla\cdot R_1 \\
	&= - ( (2\alpha-1)V_{11}-(1-\alpha)N )\cdot R_1 - ( (2\alpha-1)V_{12}^s + V_{12}^a )\cdot R_2.	
    \end{split}
    \label{eq:chris2D}
\end{align}
By orthonormality the other Christoffel symbols are given by 
\begin{align*}
    \Gamma_{11}^1 = \Gamma_{21}^1 = \Gamma_{12}^2 = \Gamma_{22}^2 = 0, \quad \Gamma_{12}^1 = -\Gamma_{11}^2, \qandq \Gamma_{21}^2 = -\Gamma_{22}^1.
\end{align*}
Plugging the expressions \eqref{eq:chris2D} into \eqref{eq:gradtheta0}, we arrive at
\begin{align}
    \nabla\theta = V_{12}^a - (1-\alpha)J\nabla\log D + (2\alpha-1) (\Phi_{21}V_{11} - \Phi_{12}V_{22} + (\Phi_{22}-\Phi_{11})V_{12}^s).
    \label{eq:gradtheta2}
\end{align}
Using the following identity 
\begin{align*}
    \Phi_{21}V_{11} - \Phi_{12}V_{22} + (\Phi_{22}-\Phi_{11})V_{12}^s &= J(\Phi_{11} V_{11} + \Phi_{22} V_{22} + (\Phi_{12} + \Phi_{21})V_{12}^s) \\
    &= \frac{1}{2} J(F-\nabla \log D),  
\end{align*}
equation \eqref{eq:gradtheta2} may be recast as 
\begin{align}
    \nabla\theta = V_{12}^a - \frac{1}{2} J\nabla\log D + \Big( \alpha-\frac{1}{2} \Big) JF,
    \label{eq:gradtheta3}
\end{align}
whose expression matches the one given in \cite{BBMT,CFGK} when $\alpha = \frac{1}{2}$. Since $F$ is a function of $\theta$, the above equation is then a non-linear PDE whenever $\alpha\ne\frac{1}{2}$. This is to be contrasted with the seemingly much nicer case $\alpha =\frac{1}{2}$, whose r.h.s. is independent of $\theta$. 

A right-hand side independent of $\theta$ can, however, be obtained by taking divergence of both sides of \eqref{eq:gradtheta3} since $F= \nabla\log \sigma$ and $\nabla\cdot(J\nabla)=0$. The equation we obtain is 
\begin{align}
    \Delta\theta = \nabla\cdot V_{12}^a.
    \label{eq:laptheta}
\end{align}
This elliptic PDE requires knowledge of $\theta$ at the domain's boundary. Assume that we know $u_i|_{\partial X} = g_i$, $\J_i = \sigma\partial_{\nu} u_i$ for $i=1,2$, and $\sigma$ at the boundary. In this setting, we find that 
\begin{align*}
    \theta|_{\partial X} &= \arg(t_{11} \nabla u_1+t_{12} \nabla u_2|_{\partial X}) \\
    &= \arg ( (t_{11} \partial_t g_1 + t_{12} \partial_t g_2)\ \bft + \sigma^{-1} (t_{11} \J_1+t_{12} \J_2)\ \bmnu),
\end{align*}
with $\bmnu$ and $\bft=J\bmnu$ the unit outgoing normal vector and its direct orthogonal vector, respectively. 

Once $\theta$ is reconstructed, we know the r.h.s. of \eqref{eq:datatoF2D} and solve for $\log\sigma$, either by integrating \eqref{eq:datatoF2D} along a curve, or by taking the divergence of both sides of \eqref{eq:datatoF2D} and solving a Poisson equation provided that $\sigma|_{\partial X}$ is known. Note that the inversion for $\theta$ and $\log\sigma$ by means of the elliptic equations \eqref{eq:laptheta} and ``divergence of \eqref{eq:datatoF2D}'' with Dirichlet conditions is {\em unique} and {\em Lipschitz-stable} in $H^2(X)$ w.r.t. the data $H_{ij}$. The details are left to the reader.

We now discuss the compatibility conditions for the gradient equations \eqref{eq:datatoF2D} and \eqref{eq:gradtheta3}, which admit a solution only if their respective r.h.s. are curl-free. Such conditions lead to a better understanding of the range of the measurement operator and are necessary to ensure that reconstructions based on ODE integrations do not depend on the choice of integration path. 

\subsection{Compatibility conditions in two dimensions of space}

The compatibility conditions for \eqref{eq:gradtheta3} and \eqref{eq:datatoF2D} are that $\nabla\cdot(J\nabla\theta)=0$ and $\nabla\cdot(J\nabla\log\sigma)=0$, respectively. For $\alpha\ne\frac{1}{2}$, these equations not only provide constraints on the redundant data, but in fact give us direct information about the unknown coefficients. In the two-dimensional case, they allow us to solve algebraically for $\cos(2\theta),\sin(2\theta)$, which in turn characterizes $F$  in terms of the data (and therefore does not require the prior resolution of $\theta$). 

Let us first simplify the expression of $F$ as follows:
\begin{align*}
    F &= \nabla\log D + 2(\Phi_{11}V_{11} + \Phi_{22}V_{22} + (\Phi_{12}+\Phi_{21})V_{12}^s) \\
    &= -V_{11} - V_{22} + 2(\Phi_{11}V_{11} + \Phi_{22}V_{22} + (\Phi_{12}+\Phi_{21})V_{12}^s) \\
    &= (\Phi_{11}-\Phi_{22}) (V_{11}-V_{22}) + (\Phi_{12}+\Phi_{21})(V_{12}+V_{21}), 
\end{align*}
where the matrices $\Phi_{11}-\Phi_{22}$ and $\Phi_{12}+\Phi_{21}$ are reflexion matrices that can be expressed in the following manner:
\begin{align}
    \begin{split}
	\Phi_{11}-\Phi_{22} &= c_2\Um + s_2 J\Um \qandq \Phi_{12}+\Phi_{21} = -s_2\Um+c_2 J\Um, \where \\
	(c_2,s_2)&:= (\cos(2\theta),\sin(2\theta)), \quad \Um := \left[
	\begin{array}{cc}
	    1 & 0 \\ 0 & -1
	\end{array}
	\right].
    \end{split}
    \label{eq:reflexions}
\end{align}
As a result, we are able to express $F$ in a rather compact way
\begin{align}
    \begin{split}
	F(x,\theta) &= \cos(2\theta) F_c(x) + \sin(2\theta) JF_c(x), \where \\
	F_c(x) &:= \Um (V_{11}-V_{22}) + J\Um (V_{12}+V_{21}).	
    \end{split}    
    \label{eq:Fnice}
\end{align}
Note the property that $\partial_\theta F = 2JF$. Now turning to the compatibility conditions proper, equations \eqref{eq:gradtheta3} and \eqref{eq:Fnice} are well-defined only if their curls are zero, which gives the following two scalar conditions
\begin{align}
    \nabla\cdot(JG)- \Big( \alpha-\frac{1}{2} \Big) \nabla\cdot (F(x,\theta(x)) &= 0 \qandq \nabla\cdot (JF(x,\theta(x))) = 0, \nonumber \\
    \where\quad G:= V_{12}^a - \frac{1}{2}J\nabla\log D. \label{eq:G}
\end{align}
Now using the chain rule and $\nabla\theta = G + \left(\alpha-\frac{1}{2}\right)JF$, we have
\begin{align*}
    \nabla\cdot(JF) &= c_2 \nabla\cdot (JF_c) - s_2 \nabla\cdot F_c + \nabla\theta\cdot (J\partial_\theta F) \\
    &= c_2 \nabla\cdot (JF_c) - s_2 \nabla\cdot F_c - 2 G \cdot (c_2F_c + s_2 JF_c) = f c_2 - g s_2,
\end{align*}
where we have defined 
\begin{align}
    f(x) := \nabla\cdot(JF_c)-2F_c\cdot G \qandq g(x) := \nabla\cdot F_c + 2JF_c\cdot G.
    \label{eq:fg}
\end{align}
Similarly, the second compatibility equation can be recast as
\begin{align*}
    \Big(\alpha-\frac{1}{2}\Big) (g c_2 + f s_2 ) = \nabla\cdot (JG) - 2\Big( \alpha-\frac{1}{2} \Big)^2 |F_c|^2.
\end{align*}

If $\alpha\ne\frac{1}{2}$, we thus see that the two compatibility equations imply the system 
\begin{align}
    \left[
    \begin{array}{cc}
	f & -g \\ g & f
    \end{array}
    \right] \left[
    \begin{array}{c}
	c_2 \\ s_2
    \end{array}
    \right] = \left[
    \begin{array}{c}
	0 \\ h
    \end{array}
    \right], \quad h:= \Big( \alpha-\frac{1}{2} \Big)^{-1} \nabla\cdot (JG) - 2\Big( \alpha-\frac{1}{2} \Big) |F_c|^2,
    \label{eq:h}
\end{align}
which may be inverted as 
\begin{align*}
    \cos(2\theta) = c_2 = \frac{gh}{f^2+g^2} \qandq \sin(2\theta) = s_2 = \frac{fh}{f^2+g^2}.
\end{align*}
Note that this solution makes sense only if the functions $f,g,h$ are such that $c_2^2 + s_2^2 = 1$, that is, if they satisfy the relation 
\begin{math}
    f^2 + g^2 = h^2. 
\end{math}
In this case, $F$ may be expressed as 
\begin{align*}
    F = \nabla\log\sigma = \frac{h}{f^2+g^2} \left( g F_c + f JF_c  \right).
\end{align*}
The right-hand-side is guaranteed to be curl-free by construction. Inserting \eqref{eq:fg} into the last equation and using $(u\cdot Jv)v - (u\cdot v)Jv = |v|^2 u$ with $u = G$ and $v=F_c$, we obtain the following {\em explicit} reconstruction formula:
\begin{align}
    \nabla\log\sigma = \frac{h}{f^2+g^2} ( (\nabla\cdot F_c) F_c + \nabla\cdot (JF_c) JF_c + 2 |F_c|^2 G ).
    \label{eq:Ffinal}
\end{align}

When $\alpha=\frac{1}{2}$, the equation $\nabla\cdot(J\nabla\theta)=0$ {\em depends solely on the data} and reads
\begin{align*}
    \nabla\cdot(JV_{12}^a) - \frac{1}{2}\Delta\log D = 0.
\end{align*}
The other compatibility equation $\nabla\cdot(JF)=0$ is still of the form $fc_2 + gs_2 = 0$, with $f,g$ defined above, which by itself only gives us $(c_2,s_2)$ up to a sign, i.e. 
\begin{align*}
    (c_2,s_2) = \pm (f^2+g^2)^{-\frac{1}{2}} (-g,f). 
\end{align*}
The above constraint provides partial answer about $\theta$ that may be used in practical reconstructions to mitigate the influence of noise in the data. Reconstructions based solely on these algebraic relations, however, seem to be less stable than  the two approaches based on integration of gradient or Laplace equations.

\subsection{Compatibility conditions in higher dimensions}
\label{sec:compintcond}

The two-dimensional case is special in that $SO_2(\Rm)$ is both one dimensional and Abelian. This is not the case in higher dimensions, where parameterizations are much more complicated, even in three dimensions. Similar compatibility conditions arise in the $n$-dimensional case for gradient equations as an application of the Poincar\'e lemma:  the ``curl'' (or exterior derivative) of the equation vanishes on both sides. For equations \eqref{eq:datatoF} or \eqref{eq:RtoF}, we have that
\begin{align}
    d F^\flat = d^2 \log\sigma = 0,
    \label{eq:dFzero}
\end{align}
which implies a system of $\frac12n(n-2)$ scalar equations (i.e. the cancellation of a 2-form in dimension $n$). 

Regarding the systems \eqref{eq:gradSi} and \eqref{eq:gradRi}, their complete integrability is equivalent to ensuring that the curvature tensor of the Euclidean connection is identically zero when expressed in either frame $S$ or $R$. Indeed, according to \cite[Theorem 1 p30]{caratheodory}, a system of the form 
\begin{align*}
    \partial_k E_i^j(x) = F_{ijk} (E(x),x), \quad 1\le i,j,k\le n, \quad E:= \{E_i^j\}_{i,j=1}^n,
\end{align*}
is integrable if and only if the following conditions hold
\begin{align*}
    \partial_l F_{ijk} + F_{pql} \partial_{E_p^q} F_{ijk} = \partial_k F_{ijl} + F_{pqk} \partial_{E_p^q} F_{ijl},
\end{align*}
which is equivalent, after using the chain rule, to
\begin{align*}
    \partial_l\partial_k E_i^j - \partial_k\partial_l E_i^j = \partial_l (F_{ijk} (E(x),x)) - \partial_k (F_{ijl} (E(x),x)) = 0.
\end{align*}
The last equation is nothing but the fact that the curvature $(\R(\bfe_l,\bfe_k)E_i)\cdot \bfe_j$ of the Euclidean metric is zero for every quadruple $(i,j,k,l)$, where the curvature tensor $\R$ is defined, for three vector fields $X,Y,Z$, by
\begin{align}
    \R(X,Y)Z := \del_X \del_Y Z - \del_Y \del_X Z - \del_{[X,Y]} Z.
    \label{eq:curvature}
\end{align}
$\R$ is known to be a tensor, i.e. it is linear over smooth functions in all its arguments, and thus the above zero curvature conditions are strictly equivalent to the equations
\begin{align*}
    (\R(E_p,E_q)E_k)\cdot E_r = (\R(E_p^i\bfe_i,E_q^j\bfe_j)E_k)\cdot E_r^l\bfe_l = E_p^iE_q^jE_j^l (\R(\bfe_i,\bfe_j)E_k)\cdot\bfe_l = 0, 
\end{align*}
for $1\le p,q,k,r\le n$, where $E$ is any frame, including $S$ and $R$. As one can see from \cite[Prop. 10 and 12 pp 196-197]{Sp2}, this highly redundant set of $n^4$ scalar equations is equivalent to $\frac12n(n-1)$ non-redundant equations which express the cancellation of the {\em sectional curvatures} 
\begin{align}
    (\R(E_i,E_j)E_i)\cdot E_j = 0, \qquad 1\le i< j\le n.
    \label{eq:zerocurvatureE}
\end{align}

For each frame, we thus have the two systems of $\frac12n(n-1)$ equations \eqref{eq:dFzero} and \eqref{eq:zerocurvatureE}. We now work with the $R$ frame because its Christoffel symbols have nicer symmetry properties and show that both systems \eqref{eq:dFzero} and \eqref{eq:zerocurvatureE} may be recast as 
\begin{align}
    \sum_{p,q=1}^n \Mm_{ij}^{pq}:R_p\otimes R_q = 0, \quad 1\le i<j\le n,
    \label{eq:contractions}
\end{align}
where the matrices $\Mm_{ij}^{pq}$ depend only on the data (we could obtain similar equations for the $S$ frame). We first rewrite the Christoffel symbols \eqref{eq:chrisR} under the form
\begin{align}
    \begin{split}
	\Gamma_{ij}^k &= W_{ij}^{kl}\cdot R_l, \where \\
	W_{ij}^{kl} &:= 2\alpha c_F (\delta_{ik} V_{lj}^s - \delta_{ij} V_{lk}^s) + \delta_{li} V_{jk}^a + \delta_{lj} \left( V_{ik}^s + \alpha c_F N \delta_{ik} \right) - \delta_{lk} \left( V_{ij}^s + \alpha c_F N \delta_{ij}  \right).
    \end{split}
    \label{eq:chrisW}    
\end{align} 
The vector fields $W_{ij}^{kl}$ depend only on the data and have the antisymmetry properties $W_{ij}^{kl} = - W_{ik}^{jl}$ and $W_{ij}^{jl} = 0$. We now derive systems of the form \eqref{eq:contractions} for both systems \eqref{eq:dFzero} and \eqref{eq:zerocurvatureE} in the case of the $R$ frame. 

\subsubsection{The condition $d F^\flat=0$.}
Starting from equation \eqref{eq:RtoF}, the equation $d F^\flat=0$ reads
\begin{align*}
    0 = \frac{1}{2} c_F^{-1} d F^\flat &= \frac{1}{2} d^2\log D + d( ( V_{kl}^s \cdot R_k)  R^\flat_l   ) = d( (V_{kl}^s \cdot R_k)  R^\flat_l).
\end{align*}
Now using identity \eqref{eq:dcarac} with vector fields $R_i, R_j\ (i\ne j)$, we have
\begin{align*}
    d( (V_{kl}^s \cdot R_k)  R^\flat_l) (R_i,R_j) &= \del_{R_i}( V_{kj}^s\cdot R_k) - \del_{R_j} (V_{ki}^s\cdot R_k) - (V_{kl}^s\cdot R_k)R_l\cdot[R_i,R_j] \\
    &= \del_{R_i}( V_{kj}^s\cdot R_k) - \del_{R_j}(V_{ki}^s\cdot R_k) \\
    &\qquad + (V_{kl}^s\cdot R_k) ( (W_{ij}^{lp}-W_{ji}^{lp})\cdot R_p) \,\,=\,\,0.
\end{align*}
Decomposing the first term in the last r.h.s. as follows
\begin{align*}
    \del_{R_i}( V_{kj}^s\cdot R_k) &= \del_{R_i} V_{kj}^s \cdot R_k + V_{kj}^s\cdot\del_{R_i} R_k = R_k\cdot \del_{R_i} V_{kj}^s + (V_{kj}^s\cdot R_p)(W_{ik}^{pq}\cdot R_q), 
\end{align*}
and doing simlarly for the second term, we obtain the set of scalar equations
\begin{align}
    \begin{split}
	R_k\cdot (\del_{R_i} V_{kj}^s &- \del_{R_j} V_{ki}^s) + (V_{kj}^s\cdot R_p)(W_{ik}^{pq}\cdot R_q) - (V_{ki}^s\cdot R_p)(W_{jk}^{pq}\cdot R_q) \\
	&\quad + (V_{ql}^s\cdot R_q) ( (W_{ij}^{lp}-W_{ji}^{lp})\cdot R_p) = 0, \quad 1\le i<j\le n.
    \end{split}    
    \label{eq:dFzeroR}
\end{align}
This system can be written in the form \eqref{eq:contractions}, where the matrices $\Mm_{pq}^{ij}$ depend only on the data. 

\subsubsection{The zero curvature conditions.}
Given the symmetries of the Christoffel symbols, one can show for an orthonormal frame that the zero sectional curvature equations \eqref{eq:zerocurvatureE} can be recast as
\begin{align}
    \del_{R_i}\Gamma_{jj}^i + \del_{R_j}\Gamma_{ii}^j = - \Gamma_{ji}^l \Gamma_{ij}^l + \Gamma_{ii}^l \Gamma_{jj}^l - (\Gamma_{ij}^l - \Gamma_{ji}^l)\Gamma_{li}^j, \quad 1\le i<j\le n.
    \label{eq:zerocurvR2}
\end{align}
Using expression \eqref{eq:chrisW} of the Christoffel symbols, the first term in the left-hand side of \eqref{eq:zerocurvR2} may be rewritten as 
\begin{align*}
    \del_{R_i}\Gamma_{jj}^i = \del_{R_i} W_{jj}^{il}\cdot R_l + W_{jj}^{il} \cdot\del_{R_i} R_l &= R_l\cdot\del_{R_i} W_{jj}^{il} + \Gamma_{il}^k W_{jj}^{il}\cdot R_k \\
    &= R_l\cdot\del_{R_i} W_{jj}^{il} + (W_{il}^{qp}\cdot R_p)(W_{jj}^{il}\cdot R_q).
\end{align*}
Proceeding similarly for the second term of the l.h.s. of \eqref{eq:zerocurvR2} and plugging expression \eqref{eq:chrisW} into the r.h.s., we obtain the set of equations for $1\le i<j\le n$
\begin{align}
    \begin{split}
	R_l\cdot (\del_{R_i} W_{jj}^{il} &+ \del_{R_j} W_{ii}^{jl}) + (W_{il}^{qp}\cdot R_p)(W_{jj}^{il}\cdot R_q) + (W_{jl}^{qp}\cdot R_p)(W_{ii}^{jl}\cdot R_q) \\
	&= -(W_{ij}^{lp}\cdot R_p)(W_{ji}^{lq}\cdot R_q) + (W_{ii}^{lp}\cdot R_p)(W_{jj}^{lq}\cdot R_q) \\
	&\qquad + ( (W_{ij}^{lp}-W_{ji}^{lp})\cdot R_p)(W_{li}^{jq}\cdot R_q)	
    \end{split}
    \label{eq:zerocurvR3}
\end{align}
($i,j$ are not being summed over but $l,p,q$ are). This is also a quadratic system of the form \eqref{eq:contractions} with different matrices $\Mm_{pq}^{ij}$.

\subsubsection{Discussion}

Based on the result of the two-dimensional case, we make the following heuristic statements: depending on the value of $\alpha$, these compatibility equations 
\begin{itemize}
    \item[(i)] either give us compatibility conditions on the data (that do not depend on the unknown frame), thus characterizing the range of the measurement operator,
    \item[(ii)] or they may allow us to invert algebraically for the cosines and sines of the $n(n-1)/2$ spherical angles that parameterize the $SO_n(\Rm)$-valued $R$ frame. This in turn may come at the price of other compatibility conditions that only depend on the data.
\end{itemize}
It remains an interesting, so far unresolved, question to find an algorithm that enforces the compatibility conditions as the system of ODEs is used to ensure that the reconstruction does not depend on the choice of integration paths.

\subsection{Remark on the elliptic method}
The system of elliptic equations for the scalar solutions $u_i$ is well-posed for almost all values of $\alpha$ and $n$. Once the solutions $u_i$ are obtained, it remains to solve the equation for $F=\nabla \log\sigma$. The only remaining compatibility condition is therefore that the latter term indeed be a gradient.  In a similar manner to what we just saw for the ODE-based method, writing the condition $d F^\flat=0$ yields $\frac12 n(n-1)$ equations of the type 
\begin{align*}
    \sum_{p,q} \Mm_{pq}^{ij}:\nabla u_p\otimes \nabla u_q = 0, \quad 1\le i<j\le n,
\end{align*}
where the matrices $\Mm_{pq}^{ij}$ depend only on the data $H_{ij}$.




\medskip
Received xxxx 20xx; revised xxxx 20xx.
\medskip

\end{document}